\newsavebox{\pullback}
\sbox\pullback{%
\begin{tikzpicture}%
\draw (0,0) -- (2ex,0ex);%
\draw (2ex,0ex) -- (2ex,2ex);%
\end{tikzpicture}}
\newcommand{\esc}[1]{\langle{#1}\rangle}
\newcommand{\mrm}[1]{\mathrm{#1}}
\DeclareMathOperator{\Ric}{Ric}
\DeclareMathOperator{\Div}{div}
\DeclareMathOperator{\grad}{grad}
\DeclareMathOperator{\tr}{tr}
\DeclareMathOperator{\id}{Id}
\DeclareMathOperator{\sgn}{sign}
\theoremstyle{thmstyleone}%
\newtheorem{theorem}{Theorem}%  meant for continuous numbers
\newtheorem{proposition}[theorem]{Proposition}% 
\newtheorem{lemma}[theorem]{Lemma} %My own addition of lemmas
\newtheorem*{structures}{Structures on the spinor bundle}
\newtheorem{remark}[theorem]{Remark}%
\newtheorem{corollary}[theorem]{Corollary}
\theoremstyle{thmstyletwo}%
\theoremstyle{thmstylethree}%
\begin{document}

\title[Article Title]{Semi-Riemannian $\text{spin}^c$ manifolds carrying generalized Killing spinors and the classification of Riemannian $\text{spin}^c$ manifolds admitting a type I imaginary generalized Killing spinor}

%%=============================================================%%
%% GivenName	-> \fnm{Joergen W.}
%% Particle	-> \spfx{van der} -> surname prefix
%% FamilyName	-> \sur{Ploeg}
%% Suffix	-> \sfx{IV}
%% \author*[1,2]{\fnm{Joergen W.} \spfx{van der} \sur{Ploeg} 
%%  \sfx{IV}}\email{iauthor@gmail.com}
%%=============================================================%%

\author{\fnm{Samuel} \sur{Lockman}} \email{samuel.lockman@mathematik.uni-regensburg.de}

%\author[2,3]{\fnm{Second} \sur{Author}}\email{iiauthor@gmail.com}
%\equalcont{These authors contributed equally to this work.}

%\author[1,2]{\fnm{Third} \sur{Author}}\email{iiiauthor@gmail.com}
%\equalcont{These authors contributed equally to this work.}

\affil{\orgdiv{Department of Mathematics}, \orgname{University of Regensburg}, \postcode{93040}, \city{Regensburg}, \country{Germany}}

%\affil[2]{\orgdiv{Department}, \orgname{Organization}, \orgaddress{\street{Street}, \city{City}, \postcode{10587}, \state{State}, \country{Country}}}

%\affil[3]{\orgdiv{Department}, \orgname{Organization}, \orgaddress{\street{Street}, \city{City}, \postcode{610101}, \state{State}, \country{Country}}}

%%==================================%%
%% Sample for unstructured abstract %%
%%==================================%%
\abstract{We classify Riemannian $\text{spin}^c$ manifolds carrying a type I imaginary generalized Killing spinor, by explicitly constructing a parallel spinor on each leaf of the canonical foliation given by the Dirac current. We also provide a class of Riemannian $\text{spin}^c$ manifolds carrying a type II imaginary generalized Killing spinor, by considering spacelike hypersurfaces of Lorentzian $\text{spin}^c$ manifolds. We carry out much of the work in the setting of semi-Riemannian $\text{spin}^c$-manifolds carrying generalized Killing spinors, allowing us to draw conclusions in this setting as well. In this context, the Dirac current is not always a closed vector field. We circumvent this in even dimensions, by considering a modified Dirac current, which is closed in the cases when the original Dirac current is not. On the path to these results, we also study semi-Riemannian manifolds carrying closed and conformal vector fields.}

\keywords{$\text{spin}^c$ geometry, generalized Killing spinors, Dirac current, closed \& conformal vector fields}

%%\pacs[JEL Classification]{D8, H51}

%%\pacs[MSC Classification]{35A01, 65L10, 65L12, 65L20, 65L70}

\maketitle
\newpage
\tableofcontents
\section{Introduction}
We say that a non-trivial (complex) spinor field $\varphi$ on a semi-Riemannian $\text{spin}^c$ manifold $(M,g)$ is a \textit{generalized Killing spinor} if there is some $g$-symmetric endomorphism field $A$ of $TM$ such that
\begin{align}
    \nabla^S_X\varphi = jA(X) \cdot \varphi \hspace{0.5cm} \forall X \in \Gamma(TM),
\end{align}
where $j \in \{1,i\}$. We call $A$ the \textit{Killing endomorphism} and if $j = 1$ $(j = i)$  we say that $\varphi$ is a \textit{real} (\textit{imaginary}) generalized Killing spinor. This is not the original definition from \cite{Rademacher91} of a generalized Killing spinor, but it is the one that is most frequent. The original definition only allowed for the Killing endomorphism $A$ to be of the form $A = \mu \id$, where $\mu$ is some real valued function on $M$. For this reason, we introduce the term "special generalized Killing spinor" to refer to this case. When the Killing endomorphism $A$ is a constant multiple of the identity, we simply call the spinor field a \textit{Killing spinor}.
\\
\\
Killing spinors where first considered in physics. They were originally introduced in general relativity and later appeared in supergravity theories. See the introduction of \cite{TwistorandKilling} for a full description of their origin. Killing spinors are also of purely geometric interest. The existence of a Killing spinor on a semi-Riemannian spin manifold implies that the manifold has constant scalar curvature, and in the Riemannian case, the manifold has to be Einstein \cite{TwistorandKilling}. Real Killing spinors on compact Riemannian spin manifolds appear as the limiting case of Friedrich's eigenvalue estimate for the Dirac operator \cite{Friedrich-erste}. Killing spinors are also related to supergeometry \cite{alekseevsky}. Generalized Killing spinors appear on hypersurfaces of semi-Riemannian spin and $\text{spin}^c$ manifolds carrying parallel spinors, which is a motivation for their study \cite{gen_cyl, Ammann2013, Diego-Romeo, Conti-salamon}.
\\ 
\\
Riemannian spin manifolds with parallel spinors are Ricci-flat and have special holonomy group \cite{Hitchin-harmonic, Friedrich-paralleler, Wang}. Baum classifies complete Riemannian spin manifolds carrying imaginary Killing spinors in \cite{Baum88} and \cite{Baum89}. Baum splits the classification into two cases, called type I and type II imaginary Killing spinors. For the type I case, Baum finds a suitable foliation of the manifold consisting of hypersurfaces carrying parallel spinors. From such a hypersurface, it is shown that a flow in the normal direction gives rise to a warped product, isometric with the original manifold. For the type II case, Baum shows that the manifold must be isometric to hyperbolic space. The classification of complete Riemannian spin manifolds with real Killing spinors goes back to the group of Friedrich, and their collaborators \cite{Friedrich_85, Friedrich_88, Friedrich_89, Friedrich_90, Grunewald_90}. The full classification of complete, simply connected Riemannian spin manifolds carrying real Killing spinors is due to Bär \cite{Bär93}. This is done by showing that the cone over the given manifold carries a parallel spinor. 
\\
\\
The above results have been extended by many in various directions. Among these, the following classifications are directly related to this article. Moroianu \cite{Moroianu-parallel} classifies complete, simply connected Riemannian $\text{spin}^c$ manifolds carrying parallel spinors. Moroianu shows that the manifold splits as a Riemannian product between a Kähler manifold (equipped with its canonical $\text{spin}^c$-structure) and a spin manifold carrying a parallel spinor \cite[Theorem 1.1]{Moroianu-parallel}. Shortly after Baum, Rademacher extends Baum's results to what we call special imaginary generalized Killing spinors \cite{Rademacher91}. Gro{\ss}e and Nakad have extended Rademacher's classification to the $\text{spin}^c$ setting \cite[Theorem 4.1]{GrosseNakad}. An extension of Rademacher's result to the setting of Riemannian spin manifolds carrying a type I imaginary generalized Killing spinor have been made by Leistner and Lischewski \cite[Theorem 4]{Leistner-Lischewski}.

\section{Preliminaries and notation}
This section is a summary of some preliminaries from spin and $\text{spin}^c$ geometry. The reader is directed to \cite{Lawson1989spin, geo-of-spacetime, Friedrich2000Dirac} for details.
\\
\\
For $n> 1$, let $ \theta : \text{Spin}_{\text{GL}}(n) \to \text{GL}_n^+$, be the connected double covering group of $\text{GL}_n^+$, where $\text{GL}_n^+$ is the connected component of the identity in $\text{GL}_n(\mathbb{R})$. If $n> 2$, this corresponds to the universal covering group. Let $M$ be an oriented smooth manifold of dimension $n$ and let $P_{\text{GL}^+}$ be the bundle of oriented frames on $M$. A \textit{topological spin structure} on $M$ is a $\text{Spin}_{\text{GL}}(n)$-principal bundle $P_{\text{Spin}_{\text{GL}}} \to M$, such that the associated $\text{GL}_n^+$-principal bundle $P_{\text{Spin}_{\text{GL}}} \times_\theta \text{GL}_n^+$ coincides with $P_{\text{GL}^+}$, meaning there is a bundle isomorphism $I : P_{\text{Spin}_{\text{GL}}} \times_\theta \text{GL}_n^+ \to P_{\text{GL}^+}$, covering the identity. We say that two topological spin structures $P_{\text{Spin}_{\text{GL}}}$ and $P_{\text{Spin}_{\text{GL}}}'$ are isomorphic if there is an isomorphism $P_{\text{Spin}_{\text{GL}}} \to P_{\text{Spin}_{\text{GL}}}'$ of $\text{Spin}_{\text{GL}}(n)$-principal bundles, compatible with the corresponding isomorphisms $I : P_{\text{Spin}_{\text{GL}}} \times_\theta \text{GL}_n^+ \to P_{\text{GL}^+}$ and $I' : P_{\text{Spin}_{\text{GL}}}' \times_\theta \text{GL}_n^+ \to P_{\text{GL}^+}$. Further, we say that $\text{Spin}_{\text{GL}}^c(n) := \text{Spin}_{\text{GL}}(n) \times_{\mathbb{Z}/2} S^1$ is the \textit{topological $\text{spin}^c$ group} and we note that there are two natural maps, $\lambda : \text{Spin}_{\text{GL}}^c(n) \to \text{GL}_n^+$ and $\rho : \text{Spin}_{\text{GL}}^c(n) \to S^1$, defined via $\lambda([A,z]) := \theta(A)$ and $\rho([A,z]) = z^2$. We define a \textit{topological $\text{spin}^c$ structure} on $M$ to be a $\text{Spin}_{\text{GL}}^c(n)$-principal bundle $P_{\text{Spin}_{\text{GL}}^c} \to M$ such that the associated $\text{GL}_n^+$-principal bundle $P_{\text{Spin}_{\text{GL}}^c} \times_{\lambda} \text{GL}_n^+$ coincides with $P_{\text{GL}^+}$. The isomorphism classes of such structures are defined in analogy with the aforementioned spin case. To any topological $\text{spin}^c$ structure, we have a naturally associated $S^1$-principal bundle $P_{S^1} := P_{\text{Spin}_{\text{GL}}^c} \times_{\rho} S^1$. A topological spin structure $P_{\text{Spin}_{\text{GL}}}$ gives rise to a double covering $P_{\text{Spin}_{\text{GL}}} \to P_{\text{GL}^+}$ of principal bundles. Similarly, a topological $\text{spin}^c$ structure $P_{\text{Spin}_{\text{GL}}^c}$ gives rise to a double covering $\Lambda : P_{\text{Spin}_{\text{GL}}^c} \to P_{\text{GL}^+} \oplus P_{S^1}$ of principal bundles. Note that a topological $\text{spin}$ structure $P_{\text{Spin}_{\text{GL}}}$ induces a topological $\text{spin}^c$ structure via $P_{\text{Spin}^c_{\text{GL}}} := P_{\text{Spin}_{\text{GL}}} \times_{\mathbb{Z}/2\mathbb{Z}} P_{S^1}$, where $P_{S^1}$ denotes the trivialized $S^1$-principal bundle on $M$. 
\\
\begin{remark}
    In terms of classifying spaces, a topological $\text{spin}^c$ structure on a smooth manifold $M$ of dimension $n$ is a lift $M \to \text{BSpin}^c_{\text{GL}}(n)$ of the classifying map $M \to \text{BGL}^+_n$ for $P_{\text{GL}^+}$ along the natural map $\text{BSpin}^c_{\text{GL}}(n) \to \text{BGL}^+_n$ induced by the map $\lambda : \text{Spin}^c_{\text{GL}}(n) \to \text{GL}^+_n$ from above. An isomorphism class of topological $\text{spin}^c$ structures then corresponds to a homotopy class of such lifts. The corresponding remark for topological spin structures is analogous.
\end{remark}
\vspace{2mm}
Next, let $g$ be a semi-Riemannian metric on $M$ of index $r$ such that $(M,g)$ is space and time oriented. We allow $r=0$, corresponding to the Riemannian case, but we exclude the case $r = n$. This yields a reduction $\iota: P_{\text{SO}^0} \xhookrightarrow{} P_{\text{GL}^+}$ of principal bundles along the canonical inclusion $\text{SO}^0(r,n-r) \xhookrightarrow{} \text{GL}^+$, where $\text{SO}^0(r,n-r)$ denotes the identity component of $\text{SO}(r,n-r)$. Given a topological $\text{spin}^c$ structure on $M$, we define the corresponding (geometric) \textit{$\text{spin}^c$ structure} as the pullback $P_{\text{Spin}^c}$ along $\Lambda$ and $\iota \oplus \id : P_{\text{SO}^0} \oplus P_{S^1}  \to P_{\text{GL}^+} \oplus P_{S^1}$. In other words, the principal bundle $P_{\text{Spin}^c}$ fits into the following pullback diagram
\[\begin{tikzcd}
	{P_{\text{Spin}^c}} & {P_{\text{Spin}_{\text{GL}}^c}} \\
	{P_{\text{SO}^0} \oplus P_{S^1}} & {P_{\text{GL}^+} \oplus P_{S^1}.}
	\arrow[drr, phantom, "\usebox\pullback" , very near start, color=black, from = 1-1, to= 2-2]
        \arrow[from=1-1, to=1-2]
	\arrow[from=1-1, to=2-1]
	\arrow["\Lambda", from=1-2, to=2-2]
	\arrow["{\iota \oplus \text{id}}", hook, from=2-1, to=2-2]
\end{tikzcd}\]
The definition of a spin structure is analogous. A space and time oriented semi-Riemannian manifold $(M,g)$, equipped with a $\text{spin}^c$ (spin) structure is said to be a \textit{time oriented semi-Riemannian $\text{spin}^c$ (spin) manifold}.
\\
\\
Let $(M,g)$ be a time oriented semi-Riemannian $\text{spin}^c$ manifold of dimension $n$ and index $r$. The bundle $P_{\text{spin}^c}$ is a $\text{spin}^c(r,n-r)$-principal bundle, where $\text{spin}^c(r,n-r) := \text{spin}(r,n-r) \times_{\mathbb{Z}/2\mathbb{Z}}S^1$ and where we write $\text{spin}(r,n-r)$ to denote the connected component of the identity of the $\text{spin}$ group sitting inside the Clifford algebra $C\ell_{r,n-r}$. Letting $\delta :  \text{spin}(r,n-r) \to \mrm{GL}(\Sigma)$ be the restriction of an irreducible representation of the complexified Clifford algebra $\mathbb{C}\ell_{r,n-r}$, we define $\Delta : \text{spin}^c(r,n-r) \to  \mrm{GL}(\Sigma)$ via $\Delta([s,z])v = z\delta(s)v$. The associated vector bundle $S := P_{\text{spin}^c} \times_{\Delta} \Sigma$ is called the \textit{complex spinor bundle} $S \to M$, which is a vector bundle of complex dimension $2^{\lfloor \frac{n}{2}\rfloor}$. When $n$ is even, $S$ splits as a direct sum $S = S^+ \oplus S^-$.
\vspace{2mm}
\begin{structures}\label{structures}
In any dimension, the spinor bundle $S$ carries the following structures:
\\
    \begin{enumerate}
    \item Clifford multiplication $\cdot : TM \otimes S \to S$, $X \otimes \varphi \mapsto X \cdot \varphi$, which satisfies the Clifford relation 
    \begin{equation*}
    X \cdot Y \cdot \varphi + Y \cdot X \cdot \varphi = -2g(X,Y)\varphi.
    \end{equation*}
    Using the metric, we can also define Clifford multiplication by one-forms. In case $n$ is even, Clifford multiplication maps elements $\varphi \in S^+$ to $X \cdot \varphi \in S^-$. 
    \vspace{2mm}
    \item\label{bundle metric} A hermitian bundle metric $\esc{\cdot,\cdot}$, which is positive definite when $r = 0$ and has signature $2^{\lfloor \frac{n}{2}\rfloor-1}$ when $r > 0$. It obeys the rule 
    \begin{align*}
        \esc{X \cdot \varphi, \psi} = (-1)^{r+1} \esc{\varphi, X \cdot \psi}.
    \end{align*}
    In case $n$ is even, the bundle metric makes the splitting $S^+ \oplus S^-$ orthogonal.
    \vspace{2mm}
    \item Given a connection $A \in \Omega^1(P_{S^1}, i\mathbb{R})$, we obtain a metric connection $\nabla^S$ on $S$ which satisfies 
    \begin{equation*}
        \nabla_X^S(Y \cdot \varphi) = (\nabla_X Y) \cdot \varphi + Y \cdot \nabla_X^S\varphi,
    \end{equation*} 
    where $\nabla$ denotes the Levi-Civita connection on $(M,g)$. In case $n$ is even, the splitting $S^+ \oplus S^-$ is $\nabla^S$-parallel. 
\end{enumerate}
\end{structures}

%\hyperref[structures]{structures on the spinor bundle}

\vspace{2mm}
The curvature $R^S$ of $\nabla^S$ is related to the Ricci curvature, $\Ric$, of $g$ via
\begin{align}\label{curvature}
    \sum_{k=1}^{n}\varepsilon_ke_k\cdot R^S(X,e_k)\varphi = -\frac{1}{2}\Ric(X) \cdot \varphi + \frac{i}{2}(X \lrcorner \omega) \cdot \varphi, 
\end{align}
where $\{e_1, \hdots, e_n\}$ is a pseudo-orthonormal frame with $\varepsilon_k = g(e_k,e_k)$ and $i\omega \in \Omega^2(M, i\mathbb{R})$ denotes the curvature 2-form coming from the connection 1-form $A \in \Omega^1(P_{S^1}, i\mathbb{R})$ \cite[Page 65]{Friedrich2000Dirac}.
\\
\begin{remark}
    The reason why we only consider time oriented manifolds, is to be able to define the above bundle metric. It can not be defined unless this assumption is included, see Remark 6.2.14 of \cite{Bärnotes}.
\end{remark}
\section{Generalized Killing spinors}
Let $(M,g)$ denote a time oriented semi-Riemannian $\text{spin}^c$ manifold of dimension $n$ and index $r$, with all the associated structure from the above section. For clarity, we repeat the definition from the introduction. A non-trivial spinor field $\varphi \in \Gamma(S)$ is said to be a \textit{generalized Killing spinor}, if there is some $g$-symmetric endomorphism field $A$ of $TM$ such that
\begin{align}\label{Killing-def}
    \nabla^S_X\varphi = jA(X) \cdot \varphi \hspace{0.5cm} \forall X \in \Gamma(TM),
\end{align}
where $j \in \{1,i\}$. We call $A$ the \textit{Killing endomorphism} and if $j = 1$ $(j = i)$  we say that $\varphi$ is a \textit{real} (\textit{imaginary}) generalized Killing spinor. In case $A = \mu \id$ for some real valued function $\mu \in C^{\infty}(M)$, we say in addition that $\varphi$ is a \textit{special generalized Killing spinor} with \textit{Killing function} $j\mu$. Lastly, if $\mu$ is a real constant, we say that $\varphi$ is a \textit{Killing spinor} with \textit{Killing number} $j\mu$. If $\varphi$ is a generalized Killing spinor, we will simply say that it is \textit{special} to mean that it is a special generalized Killing spinor.
\\
\\
Given a spinor field $\varphi \in \Gamma(S)$, we define the \textit{Dirac current} $V_{\varphi} \in \Gamma(TM)$ as the vector field that is metrically equivalent to the $1$-form $\omega_{\varphi} : X \mapsto i^{r+1}\esc{\varphi, X \cdot \varphi}$. 
When $M$ has even dimension, we write $\varphi = \varphi^+ + \varphi^-$ and use the notation $\overline{\varphi}: = \varphi^+ - \varphi^-$ to define the one-form
\begin{align}\label{modified-dirac-current}
    \overline{\omega}_{\varphi} : X \mapsto \esc{\varphi, X \cdot \overline{\varphi}}.
\end{align}
Using the symmetry property $\esc{X \cdot \psi_1, \psi_2} = (-1)^{r+1}\esc{\psi_1, X \cdot \psi_2}$ mentioned in \ref{bundle metric} of \hyperref[structures]{structures on the spinor bundle}, we see that when the index $r$ is even, the above one-form is real valued, and when $r$ is odd, this one-form is imaginary valued. When $r$ is even, we define the \textit{modified Dirac current} $\overline{V}_{\varphi}$ as the metrically equivalent vector field associated to $\overline{\omega}_{\varphi}$. When $r$ is odd, we define the \textit{modified Dirac current}, $\overline{V}_{\varphi}$ as the metrically equivalent vector field to the $1$-form $i\overline{\omega}_{\varphi}$.
\\
\\
We say that a vector field is closed if its associated $1$-form is closed and we say that a vector field $V$ is conformal if it satisfies $L_Vg = \psi g$, for some function $\psi \in C^{\infty}(M)$. Recall that a vector field $V$ is closed if and only if $g(\nabla_XV, Y) = g(\nabla_YV, X)$, for all $X, Y \in TM$. Note that if $V$ is closed, then we have $(L_Vg)(X,Y) = 2g(\nabla_XV, Y)$, for all $X,Y \in TM$. 
\\
\begin{lemma}\label{closed-conformal-lemma}
    Let $\varphi \in \Gamma(S)$ be a generalized Killing spinor and suppose that either
    \begin{enumerate}
        \item the index $r$ is odd, and $j = 1$, or \label{case-odd-1}
        \item the index $r$ is even, and $j = i$, or\label{case-even-i}
        \item the index $r$ is odd, the dimension $n$ is even, and $j=i$, or \label{case-modified-2}
        \item the index $r$ is even, the dimension $n$ is even, and $j=1$.\label{case-modified}
    \end{enumerate}
    \vspace{2mm}
    Then in Case \ref{case-odd-1} and Case \ref{case-even-i}, $V_{\varphi}$ is a closed vector field with 
    \begin{align}\label{Good-Dirac-derivative}
        \nabla_XV_{\varphi} = c\esc{\varphi, \varphi}A(X),
    \end{align}
    where $c = -2$ if $r \mod{4} \in \{0,3\}$ and $c = 2$ otherwise. In Case \ref{case-modified-2} and Case \ref{case-modified}, $\overline{V}_{\varphi}$ is a closed vector field with
    \begin{align}\label{Good-modified-Dirac-derivative}
        \nabla_X\overline{V}_{\varphi} = 2\esc{\varphi, \overline{\varphi}}A(X).
    \end{align}
    If $\varphi$ is furthermore a special generalized Killing spinor with $A = \mu \id$, then in Case \ref{case-odd-1} and Case \ref{case-even-i}, $V_{\varphi}$ is conformal with $L_{V_{\varphi}}g = 2c\mu\esc{\varphi, \varphi}g$. In Case \ref{case-modified-2} and Case \ref{case-modified}, $\overline{V}_{\varphi}$ is conformal with $L_{\overline{V}_{\varphi}}g = 4\mu \esc{\varphi, \overline{\varphi}}g$
\end{lemma}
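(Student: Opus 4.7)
The plan is to compute $(\nabla_X\omega_\varphi)(Y)=g(\nabla_XV_\varphi,Y)$ directly by differentiating the defining pairing, using metric compatibility of $\nabla^S$, the generalized Killing equation, the symmetry property from item \ref{bundle metric} of \hyperref[structures]{structures on the spinor bundle}, and the Clifford relation. The four cases are all structurally identical; what distinguishes them is a sign bookkeeping that happens to collapse in exactly these four combinations of $(r\bmod 2,j)$.

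First I would handle Cases \ref{case-odd-1} and \ref{case-even-i} uniformly. Expanding
\begin{align*}
X\esc{\varphi,Y\cdot\varphi}=\esc{\nabla^S_X\varphi,Y\cdot\varphi}+\esc{\varphi,(\nabla_XY)\cdot\varphi}+\esc{\varphi,Y\cdot\nabla^S_X\varphi}
\end{align*}
and substituting $\nabla^S_X\varphi=jA(X)\cdot\varphi$, the symmetry property converts the first term to something proportional to $\esc{\varphi,A(X)\cdot Y\cdot\varphi}$, while the last term is proportional to $\esc{\varphi,Y\cdot A(X)\cdot\varphi}$. The key algebraic observation is that in Cases \ref{case-odd-1} and \ref{case-even-i} one has $\bar j=j(-1)^{r+1}$, so after writing out the coefficients the two spinorial terms acquire \emph{equal} prefactors. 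The Clifford relation $A(X)\cdot Y+Y\cdot A(X)=-2g(A(X),Y)\id$ then collapses their sum to $-2g(A(X),Y)\esc{\varphi,\varphi}$, and multiplication by $i^{r+1}$ gives
\begin{align*}
g(\nabla_XV_\varphi,Y)=-2j(-i)^{r+1}g(A(X),Y)\esc{\varphi,\varphi},
\end{align*}
whose prefactor evaluates to $-2$ when $r\bmod 4\in\{0,3\}$ and to $+2$ otherwise, matching $c$. Since $A$ is $g$-symmetric, $g(A(X),Y)=g(X,A(Y))$, so the right-hand side is symmetric in $X,Y$ and $V_\varphi$ is closed.

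For Cases \ref{case-modified-2} and \ref{case-modified} I would exploit two facts about the splitting $S=S^+\oplus S^-$: Clifford multiplication by a vector swaps the summands and $\nabla^S$ preserves them. The first gives $A(X)\cdot\overline\varphi=-\overline{A(X)\cdot\varphi}$, while the second combined with the Killing equation yields $\nabla^S_X\overline\varphi=-jA(X)\cdot\overline\varphi$. Running the same computation as above on $X\mapsto\esc{\varphi,X\cdot\overline\varphi}$ (and multiplying by $i$ when $r$ is odd so as to pass from $\overline\omega_\varphi$ to $i\overline\omega_\varphi$), the relevant algebraic identity this time is $\bar j+j(-1)^{r+1}=0$, which holds precisely in Cases \ref{case-modified-2} and \ref{case-modified}; again the Clifford relation collapses the two spinorial terms, producing \eqref{Good-modified-Dirac-derivative}. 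Closedness of $\overline V_\varphi$ then follows from $g$-symmetry of $A$ as before.

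Finally, for the conformal statement in the special case $A=\mu\id$, the formulas become $\nabla_XV_\varphi=c\mu\esc{\varphi,\varphi}X$ and $\nabla_X\overline V_\varphi=2\mu\esc{\varphi,\overline\varphi}X$, and the identity $(L_Vg)(X,Y)=2g(\nabla_XV,Y)$ for a closed vector field $V$ immediately yields the claimed conformal factors. The main obstacle is the sign bookkeeping: identifying exactly which combinations of $r\bmod 4$, $j$ and the signs coming from $\esc{X\cdot\varphi,\psi}=(-1)^{r+1}\esc{\varphi,X\cdot\psi}$ cause the obstructing spinorial term $[\bar j\mp j(-1)^{r+1}]\esc{\varphi,Y\cdot A(X)\cdot\varphi}$ to vanish, and then correctly evaluating $-2j(-i)^{r+1}$ as $\pm 2$ in each case.
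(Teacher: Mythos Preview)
Your proposal is correct and follows essentially the same route as the paper: differentiate the defining pairing, use metric compatibility of $\nabla^S$ together with the Killing equation, move Clifford factors across via the symmetry rule $\esc{X\cdot\psi_1,\psi_2}=(-1)^{r+1}\esc{\psi_1,X\cdot\psi_2}$, and then collapse with the Clifford relation. The paper carries out each of the four cases by an explicit chain of equalities, whereas you package Cases~\ref{case-odd-1}--\ref{case-even-i} and Cases~\ref{case-modified-2}--\ref{case-modified} uniformly via the sign identities $\bar j=j(-1)^{r+1}$ and $\bar j+j(-1)^{r+1}=0$; this is a cosmetic difference only, and your evaluation $c=-2j(-i)^{r+1}$ agrees with the paper's values.
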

\begin{proof}
    Given a generalized Killing spinor $\varphi \in \Gamma(S)$, we compute for vector fields $X$ and $Y$ that
    \begin{align}\label{first_comp}
        g(\nabla_{X}V_{\varphi}, Y) &= X(g(V_{\varphi}, Y)) - g(V_{\varphi}, \nabla_{X}Y) \nonumber \\
        &= i^{r+1} (X(\esc{\varphi, Y \cdot \varphi}) - \esc{\varphi, \nabla_XY \cdot \varphi}) \nonumber \\
        &=i^{r+1}(\esc{\nabla_{X}^S\varphi, Y \cdot \varphi} + \esc{\varphi, \nabla_X^S (Y \cdot \varphi)} - \esc{\varphi, \nabla_XY \cdot \varphi}) \nonumber \\
        &=i^{r+1}(\esc{\nabla_{X}^S\varphi, Y \cdot \varphi} + \esc{\varphi, Y \cdot \nabla_X^S \varphi}) \nonumber \\
        &=i^{r+1}(\esc{jA(X)\cdot \varphi, Y \cdot \varphi} + \esc{\varphi, Y \cdot jA(X) \cdot \varphi}).
    \end{align}
    First, suppose we are in Case \ref{case-odd-1}, i.e. the index $r$ is odd, and the generalized Killing spinor is real. Then by (\ref{first_comp}) we get that
    \begin{align*}
        g(\nabla^S_XV_{\varphi}, Y) &=i^{r+1}(\esc{A(X) \cdot \varphi, Y \cdot \varphi} + \esc{\varphi, Y \cdot A(X) \cdot \varphi}) \\
         &= i^{r+1} (\esc{\varphi, A(X) \cdot Y \cdot \varphi} + \esc{\varphi, Y \cdot A(X) \cdot \varphi})\\
         &= -2i^{r+1}\esc{\varphi, \varphi}g(A(X), Y) \\
         &= c\esc{\varphi, \varphi}g(A(X), Y)
    \end{align*}
    which shows (\ref{Good-Dirac-derivative}). Note that since $A$ is $g$-symmetric, we have also showed that $V_{\varphi}$ is closed. Using this, the conformal statement follows straightforwardly. Case \ref{case-even-i} is obtained by a similar calculation. Indeed, we use (\ref{first_comp}) to get that
    \begin{align*}
        g(\nabla_{X}V_{\varphi}, Y) &= i^{r+1}(\esc{iA(X)\cdot \varphi, Y \cdot \varphi} + \esc{\varphi, Y \cdot iA(X) \cdot \varphi}) \\
        &=i^{r+2}(\esc{A(X)\cdot \varphi, Y \cdot \varphi} - \esc{\varphi, Y \cdot A(X) \cdot \varphi}) \\
        &=-i^{r+2}(\esc{\varphi, A(X) \cdot Y \cdot \varphi} + \esc{\varphi, Y \cdot A(X) \cdot \varphi}) \\
        &=2i^{r+2}\esc{\varphi, \varphi}g(A(X), Y) \\
        &=c\esc{\varphi, \varphi}g(A(X), Y),
    \end{align*}
    which completes Case \ref{case-even-i}. Suppose now that we are in Case \ref{case-modified-2}. We note that $\nabla_X^S\overline{\varphi} = -A(X) \cdot \overline{\varphi}$ and use this to compute that
    \begin{align*}
        g(\nabla_{X}\overline{V}_{\varphi}, Y) &= X(g(\overline{V}_{\varphi}, Y)) - g(\overline{V}_{\varphi}, \nabla_{X}Y) \\
        &=i(X(\esc{\varphi, Y \cdot \overline{\varphi}}) - \esc{\varphi, \nabla_XY \cdot \overline{\varphi}}) \\
        &=i(\esc{\nabla^S_X\varphi, Y \cdot \overline{\varphi}} + \esc{\varphi, \nabla^S_X(Y \cdot \overline{\varphi})} - \esc{\varphi, \nabla_XY \cdot \overline{\varphi}}) \\
        &= i(\esc{iA(X) \cdot \varphi, Y \cdot \overline{\varphi}} + \esc{\varphi, Y \cdot \nabla^S_X\overline{\varphi}}) \\
        &= -\esc{A(X) \cdot \varphi, Y \cdot \overline{\varphi}} - i\esc{\varphi, Y \cdot iA(X) \cdot \overline{\varphi}} \\
        &= -\esc{\varphi, A(X) \cdot Y \cdot \overline{\varphi}} - \esc{\varphi, Y \cdot A(X) \cdot \overline{\varphi}}) \\
        &= 2\esc{\varphi, \overline{\varphi}}g(A(X), Y),
    \end{align*}
    which completes Case \ref{case-modified-2}. Similarly, for Case \ref{case-modified}, we compute that
    \begin{align*}
        g(\nabla_{X}\overline{V}_{\varphi}, Y) &= X(g(\overline{V}_{\varphi}, Y)) - g(\overline{V}_{\varphi}, \nabla_{X}Y) \\
        &=X(\esc{\varphi, Y \cdot \overline{\varphi}}) - \esc{\varphi, \nabla_XY \cdot \overline{\varphi}} \\
        &=\esc{\nabla^S_X\varphi, Y \cdot \overline{\varphi}} + \esc{\varphi, \nabla^S_X(Y \cdot \overline{\varphi})} - \esc{\varphi, \nabla_XY \cdot \overline{\varphi}} \\
        &= \esc{A(X) \cdot \varphi, Y \cdot \overline{\varphi}} + \esc{\varphi, Y \cdot \nabla^S_X\overline{\varphi}} \\
        &= \esc{A(X) \cdot \varphi, Y \cdot \overline{\varphi}} - \esc{\varphi, Y \cdot A(X) \cdot \overline{\varphi}} \\
        &= -(\esc{\varphi, A(X) \cdot Y \cdot \overline{\varphi}} + \esc{\varphi, Y \cdot A(X) \cdot \overline{\varphi}}) \\
        &= 2 \esc{\varphi, \overline{\varphi}}g(A(X),Y),
    \end{align*}
    which completes the proof.
\end{proof} 
Our goal is to utilize these closed and sometimes conformal vector fields to draw geometric conclusions of the underlying manifold.
\section{The geometry of semi-Riemannian manifolds carrying closed and conformal vector fields}
Closed and conformal vector fields on semi-Riemannian manifolds have been studied by K\"uhnel and Rademacher in \cite{Kuhnel1994} and $\cite{kuhnel1997}$. In \cite{Kuhnel1994}, they show the following. Note that they are considering gradient vector fields, which are locally the same as closed vector fields. \\
\begin{lemma}[K\"uhnel \& Rademacher, 1994, \cite{Kuhnel1994}]\label{Kuhnel}
    Let $(M,g)$ be a semi-Riemannian manifold admitting a non-trivial conformal gradient field $V = \grad \psi$. Then the following holds.
    \begin{enumerate}
        \item\label{one} In a neighborhood $U$ of any point $p \in M$ with $g(V,V)_p \neq 0$, we have that $(U,g\vert_U)$ is isometric to a warped product of the form $(I \times F, \epsilon dr^2 + (\psi')^2g\vert_F)$, where $\epsilon = \text{sign}(g(V,V))$ and the submanifolds $\{t\} \times F$ corresponds to the level sets of $\psi$. Furthermore, the integral curves of $\frac{V}{|V|}$ are geodesics and the function $\psi : I \to \mathbb{R}$ satisfies $\psi'' = \epsilon \frac{\Delta \psi}{n}$, where $\Delta$ denotes the Laplacian. 
        \\
        \item\label{two} The zeroes of $V$ are isolated. For each such zero $p \in M$, there exists a normal neighborhood $U \subset T_pM$, for which there are coordinates on $U \setminus C_p$, with $C_p$ denoting the light cone, where the metric takes the form $g_{(r,x)} = \epsilon dr^2 + (\frac{\psi_{\epsilon}'(r)}{\psi_{\epsilon}''(0)})^2\Tilde{g}_x$, with $\epsilon = \text{sign} \hspace{1mm}g(V,V)$, and $\tilde{g}$ denotes the standard metric on $\{x \in \mathbb{R}^n \mid \esc{x,x}_{r,n-r} = \epsilon\}$. In these coordinates, we have that $\psi_{\epsilon}(r) = \psi(r,x)$. Furthermore, these coordinates extend to a conformally flat metric on all of $U$. 
    \end{enumerate}
\end{lemma}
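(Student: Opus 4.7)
The plan is to extract one master identity, $\nabla_X V = \phi X$ with $\phi := \Delta\psi / n$, and then show in each case that it forces the stated coordinate form. Since $V$ is a gradient it is closed, and for any closed vector field $(L_V g)(X,Y) = 2 g(\nabla_X V, Y)$, so the conformal assumption $L_V g = 2\phi g$ is equivalent to $\nabla_X V = \phi X$. Tracing and using $V = \grad\psi$ gives $\phi = \Delta\psi/n$ and $\mathrm{Hess}\,\psi = \phi g$. Two corollaries I would record once and reuse: $V(g(V,V)) = 2\phi g(V,V)$, giving $V(|V|) = \phi|V|$ for $|V| := \sqrt{|g(V,V)|}$; and $X(g(V,V)) = 2\phi g(X,V) = 0$ whenever $X \perp V$, so $|V|$ is constant on each connected level set of $\psi$.

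For part \ref{one}, fix $p$ with $g(V,V)_p \neq 0$ and put $N := V/|V|$, so that $g(N,N) = \epsilon$. The first corollary above, combined with a short direct computation, gives $\nabla_N N = 0$, so integral curves of $N$ are unit-speed geodesics. Let $F$ be the connected component of $\{\psi = \psi(p)\}$ through $p$ and parametrize a neighborhood of $p$ as $(r,x) \mapsto \Phi_r(x)$ using the flow $\Phi$ of $N$; since $\partial_r = N$, the metric splits as $g = \epsilon\, dr^2 + h_r$ with $h_r$ the induced metric on $\{r\}\times F$. Using $\nabla_X N = (\phi/|V|) X$ for $X$ tangent to the level sets gives $\partial_r h_r = (2\phi/|V|)\, h_r$, and by the second corollary both $|V|$ and $\phi$ descend to functions of $r$ alone, so integration yields $h_r = (|V|(r)/|V|(0))^2 h_0$. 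Combined with $\psi'(r) = N(\psi) = g(V,V)/|V| = \epsilon|V|(r)$, absorbing the constant $|V|(0)^{-2}$ into $g|_F$ produces the claimed warped product, and differentiating $\psi' = \epsilon|V|$ in $r$ together with $|V|' = \phi = \Delta\psi/n$ yields the ODE $\psi'' = \epsilon \Delta\psi/n$.

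For part \ref{two}, at a zero $p$ of $V$ the identity $\nabla V = \phi\, \mathrm{Id}$ gives $\mathrm{Hess}\,\psi(p) = \phi(p)\, g_p$; in the generic case $\phi(p) \neq 0$ this makes $p$ a non-degenerate critical point of $\psi$ and hence isolated, while the degenerate case $\phi(p) = 0$ must be ruled out (or shown to still yield isolation) by a separate analysis of the ODE $\psi'' = \epsilon \Delta\psi/n$ as $r \to 0$. To produce the normal form, introduce exponential normal coordinates on a convex neighborhood $U$ of $p$: integrating $\nabla_X V = \phi X$ along radial geodesics forces $V_q$ to be radial, so level sets of $\psi$ off the light cone $C_p$ are the pseudo-spheres $\{x \in T_pM : \esc{x,x}_{r,n-r} = \epsilon\}$. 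Applying part \ref{one} separately in the spacelike and timelike regions gives $g = \epsilon\, dr^2 + f_\epsilon(r)^2 \tilde g_x$, and matching initial conditions of the ODE at $r=0$ using $\psi_\epsilon''(0) = \phi(p) \neq 0$ pins down $f_\epsilon(r) = \psi_\epsilon'(r)/\psi_\epsilon''(0)$. The main obstacle is the last step: extending these two warped-product pieces to a single conformally flat chart across $C_p$. This requires checking via the ODE that $f_\epsilon(r)/r \to 1$ as $r \to 0$, matching the polar presentation of a conformally flat model on $T_pM$, and then gluing the two causal regions smoothly. This gluing together with the degenerate zero case is where the semi-Riemannian setting demands genuinely more care than the Riemannian analogue.
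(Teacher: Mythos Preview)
The paper does not contain a proof of this lemma: it is quoted verbatim as a result of K\"uhnel and Rademacher and immediately used as background, so there is no ``paper's own proof'' to compare against.

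That said, the paper does independently redevelop and generalize part~\ref{one} in Proposition~\ref{local_isometry}, and your argument for part~\ref{one} is essentially the one given there: flow along $N = V/|V|$, split the pulled-back metric as $\epsilon\,dr^2 + h_r$, and integrate the ODE $\partial_r h_r = (2\phi/|V|)\,h_r$ after checking that $\phi$ and $|V|$ are constant on the leaves. One small point: you assert that $\phi$ descends to a function of $r$ alone, but do not justify it; the paper does this explicitly in the proof of Proposition~\ref{local_isometry} via $\partial_k(\Div V) = n\,\partial_k \tilde E(|V|) = n\,\tilde E\,\partial_k(|V|) = 0$, using that the frame $\{\tilde E, \partial_1,\dots,\partial_{n-1}\}$ commutes.

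For part~\ref{two} the paper offers nothing beyond the citation, so your sketch already goes further than the paper does. The two gaps you flag---ruling out $\phi(p)=0$ at a zero of a non-trivial $V$, and smoothly gluing the spacelike and timelike warped-product regions across the light cone---are genuine and are precisely the content of the original K\"uhnel--Rademacher argument; they are not addressed anywhere in this paper.
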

\vspace{2mm}
Stronger results in the Riemannian setting have been proved by Rademacher in \cite{Rademacher91}. Our goal in this section is to strengthen the result \ref{one} of Lemma \ref{Kuhnel} above. We intend to do so by expanding on the ideas and techniques developed by Olea and Gutiérrez in \cite{Guti2003} and \cite{Guti2009}. First, we proceed without assuming completeness.
\subsection{Non-local splittings without completeness assumptions}
Throughout, we assume that $(M,g)$ is a connected semi-Riemannian manifold of dimension at least $2$.  
\\
\\
Let $V$ be a closed vector field with nowhere vanishing norm $|V| := \sqrt{|g(V,V)|}$ and put $E:= \frac{V}{|V|^2}$. Let $\Phi : D \to M$, with $D \subset \mathbb{R} \times M$, be the unique maximal flow of $E$. Recall that $E$ is said to be a complete vector field if $D = \mathbb{R} \times M$. For each $t \in \mathbb{R}$, we get a map $\Phi_t : M_t \to M_{-t}$, where $M_t := \{p \in M \mid (t,p) \in D\}$. The map $\Phi_t$ is a diffeomorphism with inverse $\Phi_{-t}$. In case $V$ is also conformal, we will also consider the maximal flow $\tilde{\Phi}$ of the vector field $\tilde{E} :=\frac{V}{|V|}$. We let $\tilde{D} \subset \mathbb{R} \times M$ denote the domain of $\tilde{\Phi}$.
\\
\\
One reason why we are interested in closed vector fields $V$ is that the distribution $V^{\perp} \subset TM$ is integrable and thus induces a foliation $\mathcal{F} = \{L_p\}_{p\in M}$ of codimension $1$ submanifolds $L_p$.
\\
\\
In \cite{Guti2003} and \cite{Guti2009} it is assumed in addition that $\tilde{E}$ is complete. This yields global results, which we will come back to later. To clarify matters in the non-complete case, we need to introduce the following definition. We say that a connected open subset $L_p' \subset L_p$ is a $\textit{uniform subleaf}$ with respect to the flow $\Phi$ ($\tilde{\Phi}$), if there exists some non-empty open interval $I$, containing $0$, such that $I \times L_p' \subset D$ ($I \times L_p' \subset \tilde{D}$). We call the pair $(L_p', I)$ a \textit{uniform pair} for $\Phi$ ($\tilde{\Phi}$). Given a uniform pair $(I,L_p')$ for $\Phi$ ($\tilde{\Phi}$), we write $\Phi^{L_p'}$ ($\tilde{\Phi}^{L_p'}$) to denote the restriction of $\Phi$ to $I \times L_p'$ ($\tilde{\Phi}$ to $I \times L_p$). This notation will be unambiguous since the interval $I$ will be clear from context. Note that the point $p$ might not be a point of $L_p'$. However, for any $q \in L_p$, we have that $L_q = L_p$, and therefore we will always assume that $p$ is chosen so that $p \in L_p'$.
\\
\\
We will sometimes use the following fact, which follows straightforwardly from the definition of a flow. Given a uniform pair $(I, L_p')$ for $\Phi$, the map $\Phi^{L_p'}$ is injective if and only if each integral curve for $E$ starting at a point on $L_p'$ never returns to $L_p'$.
\\
\\
Assume that $V$ is also conformal with $L_Vg = \psi g$. Then it follows from the definitions that $\psi = \frac{2 \Div V}{n}$ and $\nabla_XV = \frac{\Div V}{n}X$, for all $X \in TM$. By the last property, $|V|$ is constant along the leaves $L_p$. We collect some straightforward formulas which will be used later. We compute for any $ X,Y \perp V$ that
\begin{equation}\label{lie-deriv-for-tilde-E}
    (L_{\tilde{E}}g)(X,Y) = \frac{(L_Vg)(X,Y)}{|V|} = \frac{2\Div V}{n|V|}.
\end{equation}
Using the formula $\nabla_XV = \frac{\Div V}{n} X$ from above, we obtain that
\begin{equation}\label{tilde-E-deriv-|V|}
    \tilde{E}(|V|) = \frac{\Div V}{n}.
\end{equation}
We say that $\Phi$ ($\tilde{\Phi}$) \textit{preserves the foliation} $\mathcal{F}$, if for every uniform pair $(I,L_p')$ for $\Phi$ ($\tilde{\Phi}$), we have that $\Phi_t(L_p') \subset L_{\Phi_t(p)}$ ($\tilde{\Phi}_t(L_p') \subset L_{\tilde{\Phi}_t(p)}$).
\\
\begin{proposition}\label{foliated}
    Let $V$ be a closed vector field with nowhere vanishing norm and use the notation from above. Then $\Phi$ preserves the foliation $\mathcal{F}$. If $V$ is also conformal, then $\Tilde{\Phi}$ also preserves the foliation $\mathcal{F}$.
\end{proposition}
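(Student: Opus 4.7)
My plan is to recast ``$\Phi$ preserves the foliation $\mathcal{F}$'' as ``$\Phi$ preserves the hyperplane distribution $V^\perp$'', and then to verify the latter by a short Cartan-formula computation on the metric-dual $1$-form $\alpha := g(V,\cdot)$. For any vector field $X$, the (local) flow of $X$ preserves $V^\perp = \ker \alpha$ precisely when $L_X \alpha$ is pointwise a function multiple of $\alpha$ (then $\Phi_t^*\alpha$ solves a linear ODE of the form $\partial_t (\Phi_t^*\alpha) = (f\circ \Phi_t)\,\Phi_t^*\alpha$ and therefore stays in the line spanned by $\alpha$). Since $V$ is closed, $d\alpha = 0$, so Cartan's magic formula collapses to $L_X \alpha = d(\alpha(X))$.

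For the first assertion I would take $X = E = V/|V|^2$. Then $\alpha(E) = g(V,V)/|V|^2 = \epsilon$ with $\epsilon = \text{sign}(g(V,V))$, which is locally constant; hence $L_E \alpha = 0$ and $E$'s flow preserves $V^\perp$. For the conformal case I would take $X = \tilde{E} = V/|V|$, for which $\alpha(\tilde{E}) = \epsilon |V|$ and thus $L_{\tilde{E}}\alpha = \epsilon\, d|V|$. The identity $\nabla_W V = \frac{\Div V}{n} W$ recalled just before the proposition gives $W(|V|^2) = 2 g(\nabla_W V, V) = \tfrac{2\Div V}{n} g(W,V)$, which vanishes for $W \in V^\perp$; as $|V| > 0$ by hypothesis, $d|V|$ itself vanishes on $V^\perp$ and is therefore a function multiple of $\alpha$. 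Hence $L_{\tilde{E}} \alpha$ is a function multiple of $\alpha$ and the flow of $\tilde{E}$ preserves $V^\perp$.

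Finally I would promote distribution-preservation to leaf-preservation. Given a uniform pair $(I, L_p')$ for $\Phi$, the map $\Phi_t : M_t \to M_{-t}$ is a diffeomorphism, so $\Phi_t(L_p')$ is a connected embedded submanifold through $\Phi_t(p)$ whose tangent spaces are $(\Phi_t)_\ast (TL_p') \subset V^\perp$ by the previous step. Hence $\Phi_t(L_p')$ is a connected integral submanifold of $V^\perp$ passing through $\Phi_t(p)$, and by maximality of the leaves of $\mathcal{F}$ it is contained in $L_{\Phi_t(p)}$. The argument for $\tilde{\Phi}$ is identical, with $\tilde{\Phi}_t$ and $\tilde{D}$ in place of $\Phi_t$ and $D$.

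The only real obstacle is the conformal case: unlike $\alpha(E) = \epsilon$, the function $\alpha(\tilde{E}) = \epsilon |V|$ is not locally constant, so preservation of $V^\perp$ under $\tilde{\Phi}$ genuinely uses the conformal hypothesis to push $d|V|$ into the span of $\alpha$. Once that step is secured, everything else is formal, and the nowhere-vanishing norm assumption is precisely what keeps both $E$ and $\tilde{E}$ smooth on $M$.
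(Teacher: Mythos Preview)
Your proof is correct and takes a genuinely different route from the paper's. The paper argues by hand: it first reduces to the case where $\Phi\vert_{I\times L_p'}$ is injective, uses the flowout theorem to build a contractible neighborhood $N$ on which $V=\grad h$, observes that the leaves meet $N$ in level sets of $h$, and then checks that $t\mapsto h(\Phi_t(p))$ and $t\mapsto h(\Phi_t(q))$ have the same derivative (namely $\epsilon$, respectively $\epsilon|V|$ in the conformal case, where the constancy of $|V|$ along leaves is used). The non-injective case is then handled separately by a periodicity argument. Your approach sidesteps all of this: Cartan's formula together with $d\alpha=0$ reduces distribution-preservation to the one-line checks $\alpha(E)=\epsilon$ locally constant and $d|V|\in C^\infty(M)\cdot\alpha$, and the global Frobenius theorem (maximality of leaves) promotes this to leaf-preservation with no case split on injectivity. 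Your argument is shorter and more conceptual; the paper's is more explicit but is carrying out essentially the same computation (``$\frac{d}{dt}h(\Phi_t(p))=\epsilon$'' is the pointwise content of $L_E\alpha=0$) in local coordinates. One cosmetic point: leaves of a foliation are in general only weakly embedded, so ``connected embedded submanifold'' should read ``connected integral manifold''; this does not affect the argument, since maximality of leaves applies to arbitrary connected integral manifolds.
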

\begin{proof}
    Fix some uniform pair $(I, L_p')$ for $\Phi$. First, assume that $\Phi\vert_{I \times L_p'}$ is injective. Take some point $q \in L_p'$, with $q \neq p$. Since $L_p'$ is connected, there exists a path with embedded image $\ell \cong (0,1)$ in $L_p'$, containing both the point $p$ and the point $q$. Take a tubular neighborhood $\mathcal{U}$ of $\ell$ in $L_p'$. Since $V$ is nowhere tangent to $\mathcal{U}$ and $\Phi\vert_{I \times \hspace{0.3mm}\mathcal{U}}$ is injective, we have that $\Phi\vert_{I \times \hspace{0.3mm}\mathcal{U}}$ is a diffeomorphism onto an open submanifold $N$ of $M$ \cite[Theorem 9.20]{Lee2003}. Since $N$ has trivial de Rahm cohomology, we can write $V\vert_N = \grad h$, for some function $h : N \to \mathbb{R}$ and note that the intersection of each leaf $L_m \in \mathcal{F}$ with $N$ is either empty or a level set of $h$. Hence, if we show that $h(\Phi_t(p)) = h(\Phi_t(q))$, for all $t \in I$, then $\Phi_t(q) \in L_{\Phi_t(p)}$ and we could conclude the current case. By assumption, it is true for $t = 0$. Therefore, it suffices to show that $\frac{d}{dt}h(\Phi_t(p)) = \frac{d}{dt}h(\Phi_t(q))$ for all $t \in I$. We compute that
    \begin{align}\label{right-speed}
        \frac{d}{dt}h(\Phi_t(p)) &= dh(E_{\Phi_t(p)}) =g(V, \frac{V}{|V|^2})_{\Phi_t(p)} = \sgn{g(V,V)}.
    \end{align}
    With the same computation, we see that $\frac{d}{dt}h(\Phi_t(q)) = \sgn{g(V,V)}$ and we can conclude the case when $\Phi\vert_{I \times L_p'}$ is injective. Now, suppose instead that $\Phi\vert_{I \times L_p'}$ is not injective. Let $T \in \mathbb{R}$ be the largest positive real number such that $\Phi\vert_{(I \cap (-T,T)) \times L_p'}$ is injective. Hence, there exists an integral curve for $E$, starting at a point of $L_p'$, which returns to $L_p'$ either at $T$ or at $-T$, and we assume without loss of generality that it returns at $T$. There exists some $\varepsilon > 0$ such that $\Phi\vert_{(-\varepsilon, \varepsilon) \times \Phi_T(L_p')}$ is injective, and so by the first part of the proof it follows that every integral curve for $E$ starting at a point of $L_p'$ returns to $L_p$ at $T$. By induction, each integral curve starting at a point of $L_p'$ returns to $L_p$ exactly at $t \in\{kT\}_{k \in \mathbb{Z}} \cap I$. Hence $\Phi\vert_{(((k-1)T, (k+1)T) \hspace{0.5mm}\cap \hspace{0.5mm} I) \times L_p'}$ is injective for all $k$. It follows that $\Phi\vert_{((-T, T) \hspace{0.5mm}\cap\hspace{0.5mm} (I-kT)) \times \Phi_{kT}(L_p')}$ is injective, where we have used the notation $(a,b) - kT := (a- kT, b- kT)$. For all $k \in \mathbb{Z}$, we apply the first part of the proof to $\Phi\vert_{((-T, T) \hspace{0.5mm}\cap\hspace{0.5mm} (I-kT)) \times \Phi_{kT}(L_p')}$ and conclude that for all $t \in (-T,T) \cap (I-kT)$, we have that $\Phi_{t+kT}(L_p') = \Phi_t(\Phi_{kT}(L_p')) \subset L_{\Phi_t(\Phi_{kT}(p))} = L_{\Phi_{t+kT}(p)}$, and it follows that $\Phi$ preserves the foliation. Assume that $V$ is also conformal. Then $|V|$ is constant along the leaves of the foliation $\mathcal{F}$. We can, except for the argument that $\frac{d}{dt}h(\Phi_t(p)) = \frac{d}{dt}h(\Phi_t(q))$, use the same proof as above to show that $\tilde{\Phi}$ preserves the foliation $\mathcal{F}$. To show that $\frac{d}{dt}h(\Phi_t(p)) = \frac{d}{dt}h(\Phi_t(q))$, we use that $V$ is constant along the leaves to compute that $\frac{d}{dt}h(\Phi_t(p)) = (\sgn{g(V,V)})|V|_p = (\sgn{g(V,V)})|V|_q =   \frac{d}{dt}h(\Phi_t(q))$ and thus we may complete the proof.
\end{proof}
\begin{proposition}\label{local_isometry}
    Let $V$ be a closed vector field on $(M,g)$ with nowhere vanishing norm, and let $\Phi$ and $\tilde{\Phi}$ be given as above. Then, for each uniform pair $(L_p', I)$ for $\Phi$, the map
    \begin{align*}
        \Phi^{L_p'} : (I \times L_p', \frac{dt^2}{g(V,V)} + h_t) \to (M,g),
    \end{align*} 
    where $h_t = \Phi_t^*(g\vert_{L_{\Phi_t(p)}})$, is a local isometry. Furthermore, if $V$ is also conformal, then for each uniform pair $(L_p', I)$ for $\Tilde{\Phi}$, the map
    \begin{align*}
        \tilde{\Phi}^{L_p'} : (I \times L_p', \epsilon \hspace{0.26mm} dt^2 + f^2 g\vert_{L_p'}) \to (M,g),
    \end{align*}
    where $\epsilon = g(\tilde{E},\tilde{E})$ and $f(t) = \exp{\int_{0}^{t} \frac{\Div V}{n|V|}(s)ds}$, is a local isometry. 
\end{proposition}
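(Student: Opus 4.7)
The plan is to verify pointwise at each $(t,q) \in I \times L_p'$ that the differential of the flow map is a linear isometry, using the direct sum decomposition $T_{(t,q)}(I \times L_p') = \mathbb{R}\partial_t \oplus T_qL_p'$. By the flow property, $d\Phi^{L_p'}_{(t,q)}(\partial_t) = E_{\Phi_t(q)}$ and $d\Phi^{L_p'}_{(t,q)}(X) = (\Phi_t)_*X$ for $X \in T_qL_p'$; Proposition \ref{foliated} guarantees that $(\Phi_t)_*X \in T_{\Phi_t(q)}L_{\Phi_t(q)}$. Since $E$ is parallel to $V$ and $V$ is orthogonal to every leaf, the pullback metric has vanishing mixed block with respect to this decomposition, reducing the problem to identifying the two diagonal blocks.

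For the closed case, the $\partial_t$-block equals $g(E,E)_{\Phi_t(q)} = g(V,V)/|V|^4 = 1/g(V,V)|_{\Phi_t(q)}$, matching $\frac{dt^2}{g(V,V)}$, and the leaf block equals $h_t$ by definition, with Proposition \ref{foliated} ensuring that the restriction $\Phi_t\colon L_p' \to L_{\Phi_t(p)}$ is a well-defined smooth map so that $h_t$ makes sense. For the conformal case, the $\partial_t$-block equals $g(\tilde{E},\tilde{E}) = g(V,V)/|V|^2 = \epsilon$, and it remains only to show that $\tilde{\Phi}_t^*(g|_{L_{\tilde{\Phi}_t(p)}}) = f(t)^2\, g|_{L_p'}$ on $L_p'$.

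For this last step, I fix $X,Y \in T_qL_p'$ and set $\alpha(t) := g((\tilde{\Phi}_t)_*X,(\tilde{\Phi}_t)_*Y)_{\tilde{\Phi}_t(q)}$. The Lie derivative identity $\frac{d}{dt}\tilde{\Phi}_t^* g = \tilde{\Phi}_t^* L_{\tilde{E}} g$ yields $\alpha'(t) = (L_{\tilde{E}}g)((\tilde{\Phi}_t)_*X,(\tilde{\Phi}_t)_*Y)$, and since both pushed-forward vectors remain perpendicular to $V$, equation (\ref{lie-deriv-for-tilde-E}) reduces this to the scalar ODE $\alpha'(t) = \frac{2\Div V}{n|V|}\big|_{\tilde{\Phi}_t(q)}\cdot \alpha(t)$, which integrates to $\alpha(t) = f(t)^2 g(X,Y)_q$. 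The main obstacle is verifying that the coefficient of this ODE depends only on $t$, so that the integrand defining $f$ is unambiguous; this follows because conformality of $V$ forces $\nabla_X V = \frac{\Div V}{n} X$, making $|V|$ constant along each leaf, and then (\ref{tilde-E-deriv-|V|}) forces $\Div V$ to be constant along each leaf as well, while Proposition \ref{foliated} ensures that $\tilde{\Phi}_s$ sends $L_p'$ into a single leaf determined only by $s$.
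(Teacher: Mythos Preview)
Your proof is correct and follows essentially the same route as the paper's: both use Proposition \ref{foliated} to obtain the block form of the pullback metric, identify the $\partial_t$-block directly, and in the conformal case derive the scalar ODE $\alpha'(t) = \frac{2\Div V}{n|V|}\,\alpha(t)$ via the Lie derivative and (\ref{lie-deriv-for-tilde-E}), then argue that the coefficient depends only on $t$ because $|V|$ and $\Div V$ are leafwise constant. The paper spells out the last point slightly more explicitly, computing $\partial_k(\Div V) = n\,\partial_k\tilde{E}(|V|) = n\,\tilde{E}\,\partial_k(|V|) = 0$ in a product frame $\{\tilde{E},\partial_1,\dots,\partial_{n-1}\}$; your phrase ``(\ref{tilde-E-deriv-|V|}) forces $\Div V$ to be constant along each leaf'' implicitly uses this commutation, so you may wish to make that step explicit.
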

\begin{proof}
    By construction, $\Phi^{L_p'} : I \times L_p' \to M$ is a local diffeomorphism relating $\frac{\partial}{\partial t}$ with $\frac{V}{|V|^2}$. By Proposition \ref{foliated}, $\Phi$ preserves the foliation induced by $V^{\perp}$ and therefore the pullback metric $(\Phi^{L_p'})^*(g)$ on $I \times L_p'$ can be written as $\frac{dt^2}{g(V,V)} + h_t$, where $h_t = \Phi_t^*(g\vert_{L_{\Phi_t(p)}})$. This proves the first part. Assume now that $V$ is also conformal. By an argument similar to the above, we get that the pullback metric $(\tilde{\Phi}^{L_p'})^*(g)$ is of the form $\epsilon \hspace{0.26mm} dt^2 + m_t$, where $m_t = \tilde{\Phi}_t^*(g\vert_{L_{\Phi_t(p)}})$. It remains to determine $m_t$. Fix $v,w \in T_x(L_p')$. We intend to show that $m_t(v,w)$ satisfies the differential equation $\frac{d}{dt}m_t(v,w) = (f^2)'m_t(v,w)$. Using (\ref{lie-deriv-for-tilde-E}), we compute that 
    \begin{align*}
        \frac{d}{dt}(m_t(v,w)) &= \frac{d}{dt}((\tilde{\Phi}_t^*g)(v,w)) \\
        &= (L_{\tilde{E}}g)((\tilde{\Phi}_t)_*v,(\tilde{\Phi}_t)_*w) \\
        &= \frac{2\Div V}{n|V|}(\tilde{\Phi}_t(x))g((\tilde{\Phi}_t)_*v, (\tilde{\Phi}_t)_*w), \\
        &= \frac{2\Div V}{n|V|}(\tilde{\Phi}_t(x))(m_t(v,w)),
    \end{align*}
    and hence the abovementioned differential equation is satisfied. Showing that $\frac{2\Div V}{n|V|}(\tilde{\Phi}_t(x))$ is only a function of $t$, completes the proof. As stated above, $|V|$ is constant along the leaves. Since $\tilde{\Phi}^{L_p'}$ is a local diffeomorphism, we have that around any point $p \in M$ there is a local frame of the form $\{\tilde{E}, \partial_1, \hdots, \partial_{n-1}\}$, where $\partial_k \perp V$ for all $k \in \{1, \hdots, n-1\}$. We use (\ref{tilde-E-deriv-|V|}) and that $|V|$ is constant along the leaves, to compute that
    \begin{align*}
        \partial_k\Div V = n\partial_k\tilde{E}(|V|) =  n \tilde{E}\partial_k(|V|) = 0.
    \end{align*}
    Thus the function $\Div V$ is constant along the leaves and consequently, so is $\frac{2\Div V}{n|V|}(\tilde{\Phi}_t(x))$. Hence we may unambiguously write $\frac{2\Div V}{n|V|}(\tilde{\Phi}_t(x)) = \frac{2\Div V}{n|V|}(t)$ and conclude the proof. 
\end{proof}
When $V$ is both closed and conformal, one can give conditions for when the above local isometry is an isometry onto its image. This is the content of the following proposition, which is a generalization of a result due to Gutierrez and Olea in \cite{Guti2003} and \cite{Guti2009}. 
\\
\begin{proposition}\label{injectivity}
    Let $V$ be a closed and conformal vector field on $(M,g)$ with nowhere vanishing norm, and let $\tilde{\Phi}$ be given as above. Further, let $(L_p', I)$ be a uniform pair for $\Tilde{\Phi}$ such that $\Div V_p \neq 0$, and write $N:= \tilde{\Phi}^{L_p'}(I \times L_p')$. If either
    \vspace{2mm}
    \begin{enumerate}
        \item $\Div(V)\vert_{N}$ is non-positive or non-negative, or \label{condition_1-injectivity}
        \item $\Ric(V,V)\vert_{N}$ is non-positive or non-negative, \label{condition_2-injectivity}
    \end{enumerate}
    \vspace{2mm}
    then the map $\tilde{\Phi}^{L_p'}$ is injective.
\end{proposition}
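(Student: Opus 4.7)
The plan is a proof by contradiction. Suppose $\tilde{\Phi}^{L_p'}$ fails to be injective; then by the same threshold analysis used in the proof of Proposition \ref{foliated}, there should be a minimal positive time $T$ at which some integral curve of $\tilde{E}$ starting in $L_p'$ re-enters $L_p'$, and the first-part argument of that proposition then forces every integral curve starting in $L_p'$ to return to the full leaf $L_p$ at time $T$. Applying this to $p$ itself, the integral curve $\beta(t) := \tilde{\Phi}_t(p)$, which lies entirely in $N$ for $t \in [0,T] \subset I$, satisfies $\beta(T) \in L_p$.

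I would then exploit the closed-conformal identity $\nabla_X V = \phi X$ with $\phi := \Div V / n$ to extract three facts: (a) $|V|$ is constant along each leaf of $\mathcal{F}$, since $X(|V|^2) = 2\phi g(X,V)$ vanishes for $X \perp V$; (b) $\phi$ is constant along each leaf, by exactly the argument given near the end of the proof of Proposition \ref{local_isometry}; and (c) a short curvature calculation from $\nabla_X V = \phi X$ yields $R(X,Y)V = X(\phi)Y - Y(\phi)X$, whence $\Ric(Y,V) = -(n-1)Y(\phi)$ and in particular $\tilde{E}(\phi) = -\Ric(V,V)/((n-1)|V|)$. Combining (a) and (b) with $\beta(0), \beta(T) \in L_p$ gives $|V|(\beta(0)) = |V|(\beta(T))$ and $\phi(\beta(0)) = \phi(\beta(T)) = \phi(p) \neq 0$; and $\tilde{E}(|V|) = \phi$ (equation (\ref{tilde-E-deriv-|V|})) then yields $|V|'(0) = |V|'(T) = \phi(p) \neq 0$ along $\beta$.

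For case (\ref{condition_1-injectivity}), the sign condition on $\Div V\vert_N$ means $|V|' = \phi \circ \beta$ has one sign on $[0,T]$; together with $|V|(0) = |V|(T)$ this forces $|V|$ constant on $[0,T]$, hence $\phi \circ \beta \equiv 0$, contradicting $\phi(p) \neq 0$. For case (\ref{condition_2-injectivity}), the identity from (c) gives $|V|'' = -\Ric(V,V)/((n-1)|V|)$, so the sign condition on $\Ric(V,V)\vert_N$ makes $|V|'$ monotone on $[0,T]$; combined with the equality $|V|'(0) = |V|'(T)$, monotonicity forces $|V|' \equiv \phi(p)$ on $[0,T]$, and therefore $|V|(T) - |V|(0) = T\phi(p) \neq 0$, again a contradiction.

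The main technical obstacle will be the first step: carefully justifying that the failure of injectivity really does produce an integral curve through $p$ lying in $N$ which returns to $L_p$ at a well-defined time $T > 0$ with $[0,T] \subset I$. This requires replaying in detail the threshold-and-iteration bookkeeping from the proof of Proposition \ref{foliated}, and verifying that the return time obtained there is both attained within the interval $I$ and compatible with starting the integral curve at the distinguished point $p$. Once this return is in place, the remainder is a straightforward ODE argument on $|V|$ driven by the closed-conformal identities and the leafwise constancy of $|V|$ and $\Div V$.
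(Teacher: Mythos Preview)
Your approach is correct and essentially the same as the paper's: both track $|V|$ along an integral curve of $\tilde E$, use $\tilde E(|V|)=\Div V/n$ and the curvature identity $\Ric(V,V)=-(n-1)V(\tilde E(|V|))$ together with leafwise constancy of $|V|$ and $\Div V$, and obtain a contradiction from a return to the initial leaf. Two minor differences are worth noting: the paper works directly with an \emph{arbitrary} integral curve $\gamma$ starting at some $q\in L_p'$ that returns to $L_p'$ at a time $t_0\in I$, which sidesteps the technical obstacle you flag (no transfer of the return time to the curve through $p$ is needed); and for case~(\ref{condition_2-injectivity}) the paper extends $t\mapsto |V|_{\gamma(t)}$ periodically to all of $\mathbb{R}$ and invokes the fact that a non-constant smooth periodic function cannot have second derivative of a single sign, whereas your finite-interval argument via $|V|'(0)=|V|'(T)=\phi(p)\neq 0$ is a slightly more direct route to the same contradiction.
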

\begin{proof}
    The map $\tilde{\Phi}^{L_p'}$ is injective if and only if each integral curve of $\tilde{E}$, starting at some point of $L_p'$, with image contained in $N$, intersects $L_p'$ only at $t=0$. Fix an integral curve $\gamma$ of $\tilde{E}$ with $\gamma(0) = q \in L_p'$ and image contained in $N$. Since $|V|$ is constant along the leafs of $\mathcal{F}$, the function $|V|_{\gamma(t)}$ only depends on which leaf $\gamma(t)$ is in. Also, by (\ref{tilde-E-deriv-|V|}), the condition that $\Div V_p \neq 0$ means that $\tilde{E}(|V|)_q \neq 0$. Hence the function $|V|_{\gamma(t)}$ is not constant.
    \\
    \\
    First, suppose that $\Div(V)\vert_N \geq 0$. By (\ref{tilde-E-deriv-|V|}), we have that $\frac{d}{dt}|V|_{\gamma(t)} \geq 0$ and therefore $|V|_{\gamma(t)}$ is a non-constant monotone function. Hence, $\gamma(t)$ can never return to $L_p'$ for any $t \neq 0$. The proof is completely analogous when $\Div(V)\vert_N \leq 0$.  
    \\
    \\
    Secondly, using (\ref{tilde-E-deriv-|V|}), we get that $\nabla_XV = \tilde{E}(|V|)X$, so we may compute that
    \begin{align*}
        R(X,V)V = \nabla_X\nabla_VV - \nabla_V\nabla_XV - \nabla_{[X,V]}V = X(\tilde{E}(|V|))V - V(\tilde{E}(|V|))X,
    \end{align*}
    and therefore, $\Ric(V,V) = -(n-1)V(\tilde{E}(|V|))$. Assume that $\Ric(V,V)\vert_N$ is non-negative and suppose for a contradiction that there is some point $0 \neq t_0 \in I $ such that $\gamma(t_0) \in L_p'$. By our computation of $\Ric(V,V)$, the function $\frac{\partial^2}{\partial t^2}|V|_{\gamma(t)}$ is non-positive. By Proposition \ref{foliated}, the flow $\tilde{\Phi}$ preserves the foliation, and therefore we have that $|V|_{\gamma(t)} = |V|_{\gamma(t_0+t)}$ for all $t$ for which both sides are well defined. This means that we can extend the non-constant smooth function $|V|_{\gamma(\cdot)}$ to a non-constant smooth and periodic function $\mathbb{R} \to \mathbb{R}$ with period $t_0$, whose second derivative is non-positive. This is a contradiction since smooth non-constant periodic functions must have both positive and negative second derivatives. The same proof applies if we assume that $\Ric(V,V)\vert_N$ is non-positive.
\end{proof}
\subsection{Global splittings under completeness assumptions}
We proceed with the notation from the previous section. Given a flow $\Psi$ and an interval $(t_1,t_2) \subset \mathbb{R}$, we will sometimes write $\Psi_{(t_1, t_2)}(V)$, to denote $\Psi((t_1, t_2) \times V)$.
\vspace{2mm}
\begin{proposition}\label{surj}
    Let $\Psi$ be the flow of a nowhere vanishing complete vector field $W$ on a connected smooth manifold $P$, preserving a codimension $1$ foliation $\mathcal{Q} = \{Q_p\}_{p \in P}$. Then, for any leaf $Q_p \in \mathcal{Q}$, the map $\Psi\vert_{\mathbb{R} \times Q_p} : \mathbb{R} \times Q_p \to P$ is a normal covering map.
\end{proposition}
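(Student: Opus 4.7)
The plan is to verify the three ingredients of a normal covering map in turn: surjectivity of $\Psi|_{\mathbb{R}\times Q_p}$ as a local diffeomorphism, existence of evenly covered neighbourhoods, and transitivity of the deck group on fibres. Throughout, $W$ is implicitly transverse to $\mathcal{Q}$---otherwise $\Psi_t$ would fix each leaf and the image of the map would be contained in $Q_p\subsetneq P$. Under transversality, the differential of $\Psi:\mathbb{R}\times Q_p\to P$ at $(t,q)$ has image $\mathbb{R}\cdot W_{\Psi_t(q)}\oplus T_{\Psi_t(q)}Q_{\Psi_t(q)}=T_{\Psi_t(q)}P$, so the map is a local diffeomorphism and its image $N$ is open in $P$.

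To see $N$ is also closed, fix $y\in P$ and combine transversality with the flow-box theorem to obtain a diffeomorphism $\Psi:(-\epsilon,\epsilon)\times V\to U$, where $V\subset Q_y$ is a small plaque through $y$ and $U$ is an open neighbourhood of $y$; this chart will be reused throughout. If $U\cap N\neq\varnothing$ and $\Psi_s(v)=\Psi_t(q)$ is such a point with $v\in V$ and $q\in Q_p$, then $\Psi_{s-t}(v)=q$, so by the leaf-preserving property $\Psi_{s-t}(Q_y)=Q_p$; inverting gives $\Psi_{t-s}(Q_p)=Q_y\ni y$, whence $y\in N$. Thus $N$ is closed, and connectedness of $P$ forces $N=P$.

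For the covering property, keep the chart, shrink it to $U':=\Psi_{(-\epsilon/2,\epsilon/2)}(V)\subset U$, and set $T:=\{\tau\in\mathbb{R}\mid\Psi_\tau(p)\in Q_y\}$, which is non-empty by surjectivity. Unpacking $\Psi_t(q)\in U'$ exactly as in the preceding step yields
\begin{equation*}
\Psi^{-1}(U')\cap(\mathbb{R}\times Q_p)=\bigcup_{\tau\in T}\tilde U_\tau,\qquad \tilde U_\tau:=(\tau-\epsilon/2,\tau+\epsilon/2)\times\Psi_{-\tau}(V),
\end{equation*}
and $\Psi$ restricted to each slab $\tilde U_\tau$ factors through the chart identification $(-\epsilon/2,\epsilon/2)\times V\to U'$ to give a diffeomorphism onto $U'$. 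For disjointness, an intersection $(t,q)\in\tilde U_\tau\cap\tilde U_{\tau'}$ forces both $|\tau-\tau'|<\epsilon$ from the $t$-intervals and $\Psi_{\tau-\tau'}(V)\cap V\neq\varnothing$ from the $Q_p$-factors; but inside the chart $\Psi_{\tau-\tau'}(V)$ is exactly the slice at transverse parameter $\tau-\tau'\in(-\epsilon,\epsilon)$, disjoint from $V$ (the slice at $0$) unless $\tau=\tau'$.

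Normality is then immediate: given two preimages $(t_0,q_0),(t_1,q_1)$ of the same $y$, the element $\tau^{*}:=t_0-t_1$ satisfies $\Psi_{\tau^{*}}(q_0)=q_1\in Q_p$, so $\Psi_{\tau^{*}}$ preserves $Q_p$, and $f_{\tau^{*}}(t,q):=(t-\tau^{*},\Psi_{\tau^{*}}(q))$ is a deck transformation carrying $(t_0,q_0)$ to $(t_1,q_1)$. The main obstacle in the whole argument is the disjointness step above: one might hope to separate the $\tilde U_\tau$ by shrinking $\epsilon$ below the minimum gap between elements of $T$, but $T$ can be dense in $\mathbb{R}$---for instance, for a single leaf of an irrational linear foliation of a torus---so no such gap is available. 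The resolution is geometric rather than arithmetic: the intersection $\Psi_{-\tau}(V)\cap\Psi_{-\tau'}(V)\neq\varnothing$ in $Q_p$ translates to $V\cap\Psi_{\tau-\tau'}(V)\neq\varnothing$ inside the chart at $y$, and the two sets there are simply different plaque-slices of the chart, hence automatically disjoint once $\tau\neq\tau'$.
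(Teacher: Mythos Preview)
Your proof is correct, and the surjectivity and normality steps follow essentially the same outline as the paper's. The substantive difference is in the covering step. The paper argues by taking a \emph{smallest} positive return time $T$ and asserting that integral curves from $Q_p$ return to $Q_p$ exactly at the times $\{kT : k\in\mathbb{Z}\}$; but, as your own torus example demonstrates, the set of return times $\{\tau:\Psi_\tau(Q_p)=Q_p\}$ is a subgroup of $\mathbb{R}$ that can be dense, so no such minimal $T$ need exist in the generality of the stated proposition. Your chart-based argument sidesteps this entirely: by working inside a single flow box at $y$, the disjointness of the slabs $\tilde U_\tau$ reduces to the injectivity of that chart (distinct $\tau$'s with $|\tau-\tau'|<\epsilon$ land on distinct transverse slices), which holds regardless of the arithmetic structure of $T$. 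This is a genuine improvement over the paper's argument in this generality, though in the paper's intended applications the foliation arises from a closed vector field and the return-time subgroup is in fact discrete.

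You also make explicit the transversality of $W$ to $\mathcal{Q}$, which is needed for $\Psi|_{\mathbb{R}\times Q_p}$ to be a local diffeomorphism and which the paper's ``since $W$ is nowhere vanishing'' glosses over. Your justification is slightly imprecise---$W$ could in principle be tangent to one leaf and transverse to others, so $\Psi_t$ need not fix \emph{each} leaf---but the conclusion that transversality is a necessary hypothesis is correct.
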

\begin{proof}
    Fix some $Q_p \in \mathcal{Q}$. First, we show that $\Psi\vert_{\mathbb{R} \times Q_p} : \mathbb{R} \times Q_p \to P$ is surjective. We will show that the image $\Psi(\mathbb{R} \times Q_p) =: N$ is both open and closed. Since the vector field $W$ is nowhere vanishing, we have that $\Psi\vert_{\mathbb{R} \times Q_p}$ is a local diffeomorphism and thus, $N$ is open. If $N^c$ is empty, then we are done. Suppose $N^c$ is not empty and take some $x \in N^c$. It follows that $\Psi(\mathbb{R} \times Q_x) \subset N^c$. Indeed, if there where points $t_1, t_2 \in \mathbb{R}$, $y_1 \in Q_x$, $y_2 \in Q_p$ such that $\Psi(t_1, y_1) = \Psi(t_2, y_2)$, then we can combine the integral curves $\Psi(-, y_1)$ and $\Psi(-, y_2)$ to obtain an integral curve from $Q_p$ to $Q_x$, which would mean that $x \in N$. Hence, $x$ is contained in the open subset $\Psi(\mathbb{R} \times Q_x)$, which is a subset of $N^c$. This holds for all points in $N^c$ and thus $N^c$ is open. Hence $\Psi\vert_{\mathbb{R} \times Q_p} : \mathbb{R} \times Q_p \to P$ is surjective. Now, we will show that every $x \in P$ has an evenly covered neighborhood. If $\Psi\vert_{\mathbb{R} \times Q_p}$ is injective, then we are done because $\Psi\vert_{\mathbb{R} \times Q_p}$ would be a diffeomorphism. Suppose therefore that $\Psi\vert_{\mathbb{R} \times Q_p}$ is not injective. Take some integral curve $\gamma$ of $W$ starting at a point of $Q_p$. Let $T \in \mathbb{R}$ be the smallest positive real number such that $\gamma(T) \in Q_p$. Since the flow $\Psi$ preserves the foliation $\mathcal{Q}$, we have that each integral curve of $W$ starting at a point on $Q_p$ will return to the leaf $Q_p$ at exactly the times $\{kT \mid k \in \mathbb{Z}\}$. Fix some point $x \in P$. Since $W$ is nowhere vanishing, there exists some $\varepsilon > 0$ such that $\Psi\vert_{(-\varepsilon, \varepsilon) \times Q_x} : (-\varepsilon, \varepsilon) \times Q_x \to \Psi((-\varepsilon, \varepsilon) \times Q_x) =: U$ is a diffeomorphism. Hence $U$ is open. Let $t_x \in \mathbb{R}$ be the smallest positive real number such that $\Psi_{t_x}(Q_p) = Q_x$. We note that 
     \begin{align*}
        \Psi((t_x +kT - \varepsilon, t_x+kT + \varepsilon) \times Q_p) &= \Psi_{(t_x +kT - \varepsilon, t_x+kT + \varepsilon)}(Q_p) \\
        &= \Psi_{(- \varepsilon, \varepsilon)} \circ  \Psi_{t_x}\circ \Psi_{kT}(Q_p) \\
        &= \Psi_{(- \varepsilon, \varepsilon)} \circ  \Psi_{t_x}(Q_p) \\
        &=\Psi_{(- \varepsilon, \varepsilon)}(Q_x) = U,
    \end{align*}
    and by the minimality assumptions on $T$ and $t_x$, we conclude that
    \begin{align*}
        (\Psi\vert_{\mathbb{R} \times Q_p})^{-1}(U) = \bigcup_{k \in \mathbb{Z}} ((t_x +kT - \varepsilon, t_x+kT + \varepsilon) \times Q_p).
    \end{align*}
Hence, $\Psi\vert_{\mathbb{R} \times Q_p}$ is a covering map.  We show that the group of deck transformations of the covering act transitively on the fibers. Take any $(t_0, p_0), (t_1, p_1) \in \mathbb{R} \times Q_p$ in the fiber over some $y \in P$. We claim that the diffeomorphism
\begin{align*}
    f : \mathbb{R} \times Q_p &\to \mathbb{R} \times Q_p \\
    (t,m) &\mapsto(t + (t_0-t_1), \Psi((t_1-t_0), m))
\end{align*}
is a deck transformation mapping $(t_1, p_1)$ to $(t_0, p_0)$. It is a deck transformation since
\begin{align*}
    \Psi(f(t,m)) &= \Psi(t + (t_0-t_1), \Psi((t_1-t_0),m)) \\
    &= \Psi_t \circ \Psi((t_0-t_1), \Psi((t_1-t_0),m)) \\
    &= \Psi_t \circ \Psi(0,m) = \Psi(t,m),
\end{align*}
and it maps $(t_1, p_1)$ to $(t_0, p_0)$ since
\begin{align*}
    f(t_1, p_1) &= (t_0, \Psi((t_1-t_0), p_1)) = (t_0, \Psi_{-t_0}\circ \Psi(t_1, p_1)) = (t_0, \Psi_{-t_0}(y)) = (t_0, p_0).
\end{align*}
Hence we conclude that $\Psi\vert_{\mathbb{R} \times Q_p}$ is a normal covering map.
\end{proof}
With this, we improve Proposition \ref{local_isometry} under the additional assumption of $E$ or $\tilde{E}$ being complete.
\vspace{2mm}
\begin{proposition}\label{covering}
     Let $V$ be a closed vector field on $(M,g)$ with nowhere vanishing norm, and let $E$, $\Phi$, $\tilde{E}$, and $\Tilde{\Phi}$ be given as in the previous section. If $E$ is complete, then for any leaf $L_p$ of $\mathcal{F}$, we have that
    \begin{align*}
        \Phi^{L_p} : (\mathbb{R} \times L_p, \frac{dt^2}{g(V,V)} + h_t) \to (M,g),
    \end{align*}
    where $h_t = \Phi_t^*(g\vert_{L_{\Phi_t(p)}})$, is a normal, semi-Riemannian covering. Assume in addition that $V$ is also conformal. If $\tilde{E}$ is complete, then for any leaf $L_p$ of $\mathcal{F}$, we have that
    \begin{align*}
        \tilde{\Phi}^{L_p} : (\mathbb{R} \times L_p, \epsilon \hspace{0.26mm} dt^2 + f^2 g\vert_{L_p}) \to (M,g),
    \end{align*}
    where $\epsilon = g(\tilde{E},\tilde{E})$ and $f(t) = \exp{\int_{0}^{t} \frac{\Div V}{n|V|}(s)ds}$ is a normal, semi-Riemannian covering. 
\end{proposition}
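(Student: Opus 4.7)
The plan is to assemble Proposition \ref{covering} from three ingredients already available in the paper: Proposition \ref{local_isometry}, which gives the local isometry statement on any uniform pair; Proposition \ref{foliated}, which says that both $\Phi$ and $\tilde{\Phi}$ preserve the foliation $\mathcal{F}$; and Proposition \ref{surj}, which upgrades a foliation-preserving complete flow on a connected manifold to a normal covering map when restricted to $\mathbb{R} \times Q_p$. Completeness of $E$ (resp.\ $\tilde{E}$) is precisely what turns the local statements into global ones.

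Concretely, for the first claim I would fix a leaf $L_p \in \mathcal{F}$ and observe that since $E$ is complete, the domain of $\Phi$ contains $\mathbb{R} \times M$, so $(\mathbb{R}, L_p)$ is automatically a uniform pair. Proposition \ref{local_isometry} then applies verbatim to show that
\begin{align*}
    \Phi^{L_p} : \bigl(\mathbb{R} \times L_p, \tfrac{dt^2}{g(V,V)} + h_t\bigr) \to (M, g)
\end{align*}
is a local isometry with $h_t = \Phi_t^*(g\vert_{L_{\Phi_t(p)}})$. By Proposition \ref{foliated}, the flow $\Phi$ preserves $\mathcal{F}$, and $E$ is nowhere vanishing since $V$ has nowhere vanishing norm; thus Proposition \ref{surj} applies with $\Psi = \Phi$, $W = E$, and $\mathcal{Q} = \mathcal{F}$, yielding that $\Phi^{L_p}$ is a normal covering map of smooth manifolds. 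A normal covering which is also a local isometry is a normal semi-Riemannian covering, so the first part is complete.

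For the second claim I would run the same argument, but with $\Phi$ replaced by $\tilde{\Phi}$ and with the additional conformal hypothesis on $V$. Completeness of $\tilde{E}$ again makes $(\mathbb{R}, L_p)$ a uniform pair for $\tilde{\Phi}$, so the second half of Proposition \ref{local_isometry} delivers the required local isometry with the warped-product metric $\epsilon\, dt^2 + f^2 g\vert_{L_p}$, where $f(t) = \exp \int_0^t \tfrac{\Div V}{n|V|}(s)\,ds$ is indeed a function only of $t$ by the computation there. The second half of Proposition \ref{foliated} ensures $\tilde{\Phi}$ preserves $\mathcal{F}$, and Proposition \ref{surj} then converts $\tilde{\Phi}^{L_p}$ into a normal covering, which combined with the local isometry property finishes the proof.

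The proof is essentially a bookkeeping exercise, so there is no single hard step; the most delicate point to double-check is that the metric written on $\mathbb{R} \times L_p$ is genuinely globally defined even when $\Phi^{L_p}$ fails to be injective. For the $\Phi$-flow the pullback metric $h_t = \Phi_t^*(g\vert_{L_{\Phi_t(p)}})$ is well defined for every $t \in \mathbb{R}$ by completeness and by Proposition \ref{foliated} (which guarantees $\Phi_t(p) \in L_{\Phi_t(p)}$ and that $\Phi_t$ maps $L_p$ into a single leaf), so no ambiguity arises; for the $\tilde{\Phi}$-flow the warped form is even cleaner since $f$ is a function of $t$ alone. Once this is observed, combining the three earlier propositions gives the normal semi-Riemannian covering structure on the nose.
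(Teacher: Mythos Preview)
Your proposal is correct and matches the paper's own proof, which simply cites Proposition~\ref{surj} and Proposition~\ref{local_isometry}; your explicit invocation of Proposition~\ref{foliated} is just making visible the hypothesis needed to apply Proposition~\ref{surj}. The additional care you take about the global well-definedness of the pullback metric is a welcome clarification but does not depart from the paper's approach.
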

\begin{proof}
    The statement follows by Proposition \ref{surj} and Proposition \ref{local_isometry}.
\end{proof}
By applying Proposition \ref{injectivity} we obtain the following corollary.
\vspace{2mm}
\begin{corollary}\label{injectivity-cor}
    Let $V$ be a closed and conformal vector field on $(M,g)$ with nowhere vanishing norm. Suppose $\tilde{E}$ is complete, and $\Div V$ is not constantly zero. If either $\Div V$ or $\Ric(V,V)$ is non-positive or non-negative, then the covering $\tilde{\Phi}^{L_p}$ from Proposition \ref{covering} is an isometry for all leaves $L_p$.
\end{corollary}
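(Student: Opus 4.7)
The plan is to reduce everything to a single application of Proposition \ref{injectivity} on a carefully chosen leaf, and then transport the resulting injectivity to every other leaf by exploiting the fact that the complete flow $\tilde\Phi$ permutes the leaves.

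First, I note a key ingredient hidden inside the proof of Proposition \ref{local_isometry}: the function $\Div V$ is constant along each leaf of $\mathcal{F}$. Consequently, the hypothesis that $\Div V$ is not identically zero produces at least one leaf $L_0 \in \mathcal{F}$ on which $\Div V$ is nowhere zero. Pick such an $L_0$, and pick any $p_0 \in L_0$. Because $\tilde E$ is complete, the pair $(L_0, \mathbb{R})$ is a uniform pair for $\tilde\Phi$, and by Proposition \ref{surj} (applied through Proposition \ref{covering}) the image $N := \tilde\Phi^{L_0}(\mathbb{R} \times L_0)$ is all of $M$. Either sign hypothesis on $\Div V$ or $\Ric(V,V)$ holds globally on $M$, hence holds on $N$, so Proposition \ref{injectivity} applies and yields injectivity of $\tilde\Phi^{L_0}$. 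Combined with Proposition \ref{covering}, this bijective semi-Riemannian covering is in fact an isometry.

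Now fix any other leaf $L_q$. Since $\tilde\Phi^{L_0}$ is surjective and $\tilde\Phi$ preserves $\mathcal{F}$ (Proposition \ref{foliated}), there exists $t_0 \in \mathbb{R}$ with $\tilde\Phi_{t_0}(L_0) = L_q$. To transfer injectivity to $L_q$, suppose $\tilde\Phi_t(x) = \tilde\Phi_s(y)$ for some $x,y \in L_q$. Write $x = \tilde\Phi_{t_0}(x')$ and $y = \tilde\Phi_{t_0}(y')$ with $x', y' \in L_0$; the flow law then gives $\tilde\Phi_{t-s+t_0}(x') = \tilde\Phi_{t_0}(y')$, and applying $\tilde\Phi_{-t_0}$ yields $\tilde\Phi_{t-s}(x') = y'$. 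Injectivity of $\tilde\Phi^{L_0}$ forces $t = s$ and $x' = y'$, hence $x = y$. Thus $\tilde\Phi^{L_q}$ is injective, and by Proposition \ref{covering} it is a semi-Riemannian isometry.

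The only subtle point, and the one that I expect to require the most care, is securing a starting leaf on which the pointwise hypothesis $\Div V_p \neq 0$ of Proposition \ref{injectivity} holds; this is precisely where the non-triviality assumption on $\Div V$ combines with its leafwise constancy. Everything else is a routine manipulation of the globally defined flow $\tilde\Phi$.
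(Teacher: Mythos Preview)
Your proof is correct and follows essentially the same strategy as the paper: apply Proposition \ref{injectivity} on one well-chosen leaf and then transfer injectivity to all other leaves via the foliation-preserving flow. Your write-up is more explicit than the paper's on two points: you invoke the leafwise constancy of $\Div V$ (which is slightly more than needed, since Proposition \ref{injectivity} only requires $\Div V_{p_0}\neq 0$ at the single basepoint), and you spell out the transfer argument in detail, whereas the paper simply says ``by Proposition \ref{foliated}, $\tilde\Phi$ preserves the foliation, and therefore the same is true for any other leaf.''
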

\begin{proof}
    Let $p$ be such that $\Div V_p \neq 0$. By Proposition \ref{injectivity}, we have that $\tilde{\Phi}^{L_p}$ is an isometry. By Proposition \ref{foliated}, $\tilde{\Phi}$ preserves the foliation, and therefore the same is true for any other leaf $L_q \in \mathcal{F}$. 
\end{proof}
\section{Returning to generalized Killing spinors}
We will apply the results of the previous section to semi-Riemannian $\text{spin}^c$ manifolds carrying generalized Killing spinors. In the Riemannian case, we have a natural distinction of type I and type II imaginary generalized Killing spinors, and this allows us to further develop the classification. Given a type I imaginary generalized Killing spinor on a Riemannian $\text{spin}^c$ manifold, we will find an explicit parallel spinor on each leaf of the foliation given by the orthogonal complement of the Dirac current. One could make ad hoc conditions for when similar results hold in the semi-Riemannian setting, but we do not conform to this. Nevertheless, due to a result by Bohle \cite{Bohle}, we can find Killing and sometimes parallel spinors on the leaves when we restrict our attention to Killing spinors on time oriented semi-Riemannian spin manifolds.

\subsection{The semi-Riemannian $\text{spin}^c$ setting}\label{semi-rieman-set}
Let $(M,g)$ be a connected time oriented semi-Riemannian $\text{spin}^c$ manifold of index $r$ and dimension $n$. Let $\varphi$ be a generalized Killing spinor as in either of the cases of Lemma \ref{closed-conformal-lemma}. To simplify notation, we shall throughout the remainder of this section write $V$ to denote either $V_{\varphi}$, corresponding to Case \ref{case-odd-1} and Case \ref{case-even-i} of Lemma \ref{closed-conformal-lemma}, or $\overline{V}_{\varphi}$, corresponding to Case \ref{case-modified-2} and Case \ref{case-modified} of Lemma \ref{closed-conformal-lemma}. Hence, by Lemma \ref{closed-conformal-lemma}, $V$ is a closed vector field and if $\varphi$ is a special generalized Killing spinor, then $V$ is also conformal. Let $\mathcal{F} = \{L_p\}_{p \in M}$ be the foliation induced by $V^{\perp}$. Further, we write $\beta$ to denote either $\esc{\varphi, \varphi}$ or $\esc{\varphi, \overline{\varphi}}$, with the analog case distinction as for $V$. Since $\nabla_XV = c\beta A(X)$, we get that $\Div V = c\beta \tr(A)$, where in Case \ref{case-odd-1} and Case \ref{case-even-i}, $c \in \{\pm 2\}$ corresponds to the constant $c$ given in Lemma \ref{closed-conformal-lemma}, and in Case \ref{case-modified-2} and Case \ref{case-modified}, $c = 2$. If $|V|$ is nowhere vanishing, then we put $E := \frac{V}{|V|^2}$ and let $\Phi$ be the unique maximal flow of $E$. Similarly, let $\tilde{\Phi}$ be the unique maximal flow of $\tilde{E} := \frac{V}{|V|}$. 
\\
\\
The following facts are straightforward computations. In Case \ref{case-odd-1} and Case \ref{case-modified}, the function $Q_+ = \beta^2 + g(V,V)$ is locally constant. In Case \ref{case-even-i} and Case \ref{case-modified-2}, the function $Q_- = \beta^2 - g(V,V)$ is locally constant. Hence, in the corresponding cases, we have that $|V| = \sqrt{|Q_{\pm} -\beta^2|}$. If $|V|$ is nowhere vanishing, we shall write $\tilde{\beta}:= c\frac{\beta}{|V|} = c\frac{\beta}{\sqrt{|Q_{\pm} -\beta^2|}}$ and $\epsilon = \text{sign}(g(V,V))$, to simplify the formulas below.
\\ \\
Given a hypersurface and a unit normal $\xi$, we write $W^{\xi}$ to denote the Weingarten map $X \mapsto -(\nabla_X\xi)^{\perp}$. 
\\
\begin{proposition}\label{local isometry at spinors}
    Let $(M,g)$ be a time oriented semi-Riemannian $\text{spin}^c$ manifold of index $r$ and dimension $n$ carrying a generalized Killing spinor $\varphi$, with $|V|$ nowhere vanishing, such that any of the cases of Lemma \ref{closed-conformal-lemma} holds. For any leaf $L_p$, the Weingarten map $W^{\tilde{E}}$ for $L_p$ can be written as
    \begin{align}\label{Weingarten}
        W^{\tilde{E}}(X) = -\tilde{\beta}(A(X) - \epsilon g(A(X), \tilde{E})\tilde{E}).
    \end{align}
    For any uniform pair $(L_p', I)$ for $\Phi$, the map $\Phi^{L_p'} : (I \times L_p', \frac{dt^2}{g(V,V)} + h_t) \to (M,g)$, where $h_t = \Phi_t^*(g\vert_{L_{\Phi_t(p)}})$, is a local isometry. If $\varphi$ is special with $A = \mu \id$, then for any uniform pair $(L_p', I)$ for $\tilde{\Phi}$ we get the local isometry $\tilde{\Phi}^{L_p'} : (I \times L_p', \epsilon \hspace{0.26mm} dt^2 + f^2 g\vert_{L_p'}) \to (M,g)$, where 
    \begin{align}\label{eff}
        f(t) = \exp{\int_{0}^{t} \mu\tilde{\beta} ds}.
    \end{align}
    Furthermore, if the subleaf $L_p'$ is such that $(\mu \beta)_p \neq 0$ and the interval $I$ is such that either the function $\mu \beta$ or the function
    \vspace{2mm}
    \begin{itemize}
        \item
        $4(n-1)j^2\mu^2g(V,V) + 2\overline{j}i^{r+1}(nV(\mu)\esc{\varphi, \varphi} + \esc{\varphi, \grad(\mu) \cdot V \cdot \varphi})$  \text{in Case \ref{case-odd-1} and Case \ref{case-even-i}}
        \item $4(n-1)j^2\mu^2g(V,V) -2nV(\mu)\esc{\varphi, \overline{\varphi}} - 2\esc{\grad(\mu) \cdot V\cdot \varphi, \overline{\varphi}}$ \hspace{7.4mm} \text{in Case \ref{case-modified-2} and Case \ref{case-modified}}
    \end{itemize}
    \vspace{2mm}
    is non-negative or non-positive on $\tilde{\Phi}(I \times L_p')$, then the map $\tilde{\Phi}^{L_p'}$ is injective.
\end{proposition}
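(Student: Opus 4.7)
For the Weingarten formula (\ref{Weingarten}), I would start from $\nabla_X V = c\beta A(X)$ (Cases \ref{case-odd-1}, \ref{case-even-i}) or the corresponding identity for $\overline{V}_\varphi$ (Cases \ref{case-modified-2}, \ref{case-modified}) supplied by Lemma \ref{closed-conformal-lemma}. For $X \in TL_p = V^\perp$ I write
\[
\nabla_X \tilde{E} = \frac{\nabla_X V}{|V|} + V \cdot X\!\left(\frac{1}{|V|}\right);
\]
the second summand is proportional to $V$, hence to $\tilde{E}$, so projecting $-\nabla_X \tilde{E}$ onto $TL_p$ discards it. Substituting $\nabla_X V = c\beta A(X)$ and $\tilde{\beta} = c\beta/|V|$ then yields (\ref{Weingarten}); the $\epsilon g(A(X),\tilde{E})\tilde{E}$ correction is exactly what makes the right-hand side tangent to $L_p$, since $A(X)$ need not be.

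For the two local-isometry statements, the closed (respectively closed and conformal) hypotheses of Proposition \ref{local_isometry} are supplied by Lemma \ref{closed-conformal-lemma}, so both splittings follow by direct quotation. The only identification to perform is the warping function: specialising $A = \mu\id$ gives $\nabla_X V = c\beta\mu X$, whose trace reads $\Div V = nc\beta\mu$, so that $\frac{\Div V}{n|V|} = \frac{c\beta\mu}{|V|} = \mu\tilde{\beta}$, which matches (\ref{eff}) with the function $f$ produced by Proposition \ref{local_isometry}.

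For the injectivity claim I would apply Proposition \ref{injectivity}. The condition $\Div V_p \neq 0$ rephrases $(\mu\beta)_p \neq 0$ via $\Div V = nc\beta\mu$ and $c \neq 0$; the first sign hypothesis then corresponds to the $\mu\beta$-condition up to the harmless constant sign of $c$. For the second sign hypothesis, I must identify $\Ric(V,V)$ with the displayed function. I would substitute the Killing equation $\nabla_X^S\varphi = j\mu X\cdot\varphi$ into $R^S(X,Y)\varphi$ to obtain
\[
R^S(X,Y)\varphi = j\bigl(X(\mu)Y - Y(\mu)X\bigr)\cdot\varphi + j^2\mu^2(Y\cdot X - X\cdot Y)\cdot\varphi,
\]
and then compute $\sum_k \varepsilon_k e_k \cdot R^S(X, e_k)\varphi$ using the Clifford contractions $\sum_k\varepsilon_k e_k\cdot e_k\cdot\varphi = -n\varphi$ and $\sum_k\varepsilon_k e_k\cdot X\cdot e_k\cdot\varphi = (n-2)X\cdot\varphi$. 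Invoking the spin$^c$ curvature identity (\ref{curvature}) yields
\[
\Ric(V)\cdot\varphi = 2njV(\mu)\varphi + 2j\grad(\mu)\cdot V\cdot\varphi + 4(n-1)j^2\mu^2 V\cdot\varphi + i(V\lrcorner\omega)\cdot\varphi.
\]
Pairing with $\varphi$ (and with $\overline{\varphi}$ in Cases \ref{case-modified-2}, \ref{case-modified}), using the defining relation $\esc{\varphi, X\cdot\varphi} = i^{-(r+1)}g(V_\varphi, X)$ to convert the terms linear in $V\cdot\varphi$ into $g(V,V)$, and noting that the $\omega$-term vanishes because $\omega(V_\varphi,V) = 0$, then brings the right-hand side into the displayed form, after the sign identity $(-1)^{r+1}j = \overline{j}$ reshapes the prefactor $(-1)^{r+1}i^{-(r+1)}$ arising from the rewrite of $\esc{\Ric(V)\cdot\varphi,\varphi}$ into the coefficient $2\overline{j}\,i^{r+1}$ of the bracket.

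The main obstacle is the sign bookkeeping: carefully tracking the interplay of $(-1)^{r+1}$ (from the bundle-metric symmetry in \ref{bundle metric}), $i^{r+1}$ (from the Dirac-current normalisation), $c$ (from Lemma \ref{closed-conformal-lemma}), and $j,\overline{j}$ across the four cases. Cases \ref{case-modified-2} and \ref{case-modified} additionally require pairing against $\overline{\varphi}$ rather than $\varphi$, the variant $\nabla_X^S\overline{\varphi} = -jA(X)\cdot\overline{\varphi}$ of the Killing equation (which follows from the chirality decomposition being $\nabla^S$-parallel together with the fact that $X\cdot$ swaps $S^{\pm}$), and the modified-Dirac-current relation (\ref{modified-dirac-current}) to convert spinor inner products back to metric data.
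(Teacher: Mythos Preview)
Your proposal is correct and follows essentially the same approach as the paper's proof: direct computation of $\nabla_X\tilde E$ for the Weingarten formula, citation of Lemma~\ref{closed-conformal-lemma} and Proposition~\ref{local_isometry} for the local isometries, and the spinorial curvature identity (\ref{curvature}) combined with case-by-case pairing for the $\Ric(V,V)$ formula feeding into Proposition~\ref{injectivity}. The only cosmetic differences are that the paper computes the $V$-proportional summand in $\nabla_X\tilde E$ explicitly (confirming it equals $-\epsilon\tilde\beta\, g(A(X),\tilde E)\tilde E$) rather than discarding it by projection, and spells out the four pairing computations for $\Ric(V,V)$ separately rather than compressing Cases~\ref{case-odd-1} and~\ref{case-even-i} via your sign identity $(-1)^{r+1}j=\overline{j}$.
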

\begin{proof}
    We compute that 
    \begin{align*}
        \nabla_X\tilde{E} &= X(|g(V, V)|^{-\frac{1}{2}})V + |V|^{-1}\nabla_XV \\
        &= -\frac{1}{2}|g(V, V)|^{-\frac{3}{2}}2\epsilon g(\nabla_XV, V)V + |V|^{-1}c\beta A(X) \\
        &= -|g(V, V)|^{-\frac{3}{2}}c\beta\epsilon g(A(X), V)V + |V|^{-1}c\beta A(X) \\
        &=\frac{c\beta}{|V|}(A(X) - \epsilon g(A(X),\tilde{E})\tilde{E}) \\
        &= \tilde{\beta}(A(X) - \epsilon g(A(X),\tilde{E})\tilde{E}),
    \end{align*}
    and hence we obtain the formula for the Weingarten map. The statements about the local isometries follows by Lemma \ref{closed-conformal-lemma} and Proposition \ref{local_isometry}. The injectivity statement is a translation of the conditions of Proposition \ref{injectivity} into this setting. By the above, we get that $\Div V = cn\mu\beta$. To compute $\Ric(V,V)$, we first compute that
    \begin{align*}
        R^S(X,e_k)\varphi &= \nabla^S_X\nabla^S_{e_k}\varphi - \nabla^S_{e_k}\nabla^S_X\varphi - \nabla^S_{[X, e_k]}\varphi \\
        &=j (\nabla^S_X \mu e_k\cdot \varphi - \nabla^S_{e_k}\mu X \cdot \varphi - \mu [X, e_k]\cdot \varphi) \\
        &= j (X(\mu) e_k\cdot \varphi + \mu e_k \cdot \nabla^S_X\varphi - e_k(\mu) X  \cdot \varphi - \mu X \cdot \nabla^S_{e_k}\varphi) \\
        &= j (X(\mu) e_k\cdot \varphi + j\mu^2 e_k \cdot X \cdot \varphi - e_k(\mu) X  \cdot \varphi - j\mu^2 X \cdot e_k \cdot \varphi) \\
        &= j(X(\mu)e_k - e_k(\mu)X) \cdot \varphi + j^2\mu^2(e_k \cdot X \cdot \varphi - X \cdot e_k \cdot \varphi),
    \end{align*}
    and hence the left hand side of (\ref{curvature}) becomes
    \begin{align*}
        LHS &= \sum_{k=1}^{n}\varepsilon_ke_k \cdot (j(X(\mu)e_k - e_k(\mu)X) \cdot \varphi + j^2\mu^2(e_k \cdot X \cdot \varphi - X \cdot e_k \cdot \varphi)) \\
        &= -jnX(\mu)\varphi - j\grad(\mu) \cdot X\cdot \varphi + 2j^2\mu^2\sum_{k=1}^n\varepsilon_k e_k \cdot (e_k \cdot X \cdot \varphi + g(X,e_k)\varphi) \\
        &= -jnX(\mu)\varphi - j\grad(\mu)\cdot X\cdot \varphi -2(n-1)j^2\mu^2X \cdot \varphi,
    \end{align*}
    and thus we can write the identity (\ref{curvature}) as
\begin{align*}
    \Ric(X) \cdot \varphi = 2jnX(\mu)\varphi + 2j\grad(\mu) \cdot X\cdot \varphi +4(n-1)j^2\mu^2X \cdot \varphi + i(X \lrcorner \omega) \cdot \varphi.
\end{align*}
With this, we compute in Case \ref{case-odd-1} and Case \ref{case-even-i} that
\begin{align*}
    \Ric(V,V) &= g(\Ric(V), V) \\
    &= i^{r+1}\esc{\varphi, \Ric(V) \cdot \varphi} \\
    &= i^{r+1}\esc{\varphi, 2jnV(\mu)\varphi + 2j\grad(\mu)\cdot V\cdot \varphi +4(n-1)j^2\mu^2V \cdot \varphi + i(V \lrcorner \omega) \cdot \varphi} \\
    &= 4(n-1)j^2\mu^2g(V,V) + 2\overline{j}i^{r+1}(nV(\mu)\esc{\varphi, \varphi} + \esc{\varphi, \grad(\mu) \cdot V \cdot \varphi}),
\end{align*}
where in the last step we used that $\omega$ is a 2-form. Similarly, for Case \ref{case-modified-2} we compute that
\begin{align*}
    \Ric(V,V) &= g(\Ric(V), V) = i\esc{\varphi, \Ric(V) \cdot \overline{\varphi}} = i\esc{\Ric(V) \cdot \varphi, \overline{\varphi}} \\
    &= i\esc{2inV(\mu)\varphi + 2i\grad(\mu) \cdot V\cdot \varphi -4(n-1)\mu^2V \cdot \varphi + i(V \lrcorner \omega) \cdot \varphi, \overline{\varphi}} \\
    &=-4(n-1)\mu^2g(V,V) - 2\esc{nV(\mu)\varphi + \grad(\mu) \cdot V\cdot \varphi, \overline{\varphi}} \\
    &=-4(n-1)\mu^2g(V,V) - 2\esc{nV(\mu)\varphi, \overline{\varphi}} -2 \esc{\grad(\mu) \cdot V\cdot \varphi, \overline{\varphi}},
\end{align*}
and for case $\ref{case-modified}$, we get that
\begin{align*}
    \Ric(V,V) &= g(\Ric(V), V) = \esc{\varphi, \Ric(V) \cdot \overline{\varphi}} = -\esc{\Ric(V) \cdot \varphi, \overline{\varphi}} \\
    &= -\esc{2nV(\mu)\varphi + 2\grad(\mu) \cdot V\cdot \varphi +4(n-1)\mu^2V \cdot \varphi + i(V \lrcorner \omega) \cdot \varphi, \overline{\varphi}} \\
    &= 4(n-1)\mu^2g(V,V) -2\esc{nV(\mu)\varphi + \grad(\mu) \cdot V\cdot \varphi, \overline{\varphi}}\\
    &= 4(n-1)\mu^2g(V,V) -2(nV(\mu)\esc{\varphi, \overline{\varphi}} - \esc{\grad(\mu) \cdot V\cdot \varphi, \overline{\varphi}}),
\end{align*}
which finishes the proof.
\end{proof}
Applying part \ref{two} of Lemma \ref{Kuhnel}, we can also compute the metric around any zero of $V$, when $\varphi$ is a special generalized Killing spinor.
\\
\begin{proposition}
    Let $(M,g)$ be a time oriented semi-Riemannian $\text{spin}^c$ manifold of index $r$ and dimension $n$ carrying a special generalized Killing spinor $\varphi$ with Killing function $\mu$ such that any of the cases of Lemma \ref{closed-conformal-lemma} holds. Then the zeroes of $V$ are isolated. For each such zero $p \in M$, there exists a normal neighborhood $U \subset T_pM$, for which there are coordinates on $U \setminus C_p$, with $C_p$ denoting the light cone, where the metric takes the form $g_{(r,x)} = \epsilon dr^2 + (\frac{\psi_{\epsilon}'(r)}{\psi_{\epsilon}''(0)})^2\Tilde{g}_x$, with $\epsilon = \text{sign} \hspace{1mm}g(V,V)$, and $\tilde{g}$ denotes the standard metric on $\{x \in \mathbb{R}^n \mid \esc{x,x}_{r,n-r} = \epsilon\}$. In these coordinates, we have that $V = \grad \psi$, where $\psi(r,x) = \psi_{\epsilon}(r)$. Furthermore, these coordinates extend to a conformally flat metric on all of $U$.
\end{proposition}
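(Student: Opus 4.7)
The plan is to reduce the statement to a direct application of part \ref{two} of Lemma \ref{Kuhnel}. The only thing to verify is that, locally around each zero of $V$, our vector field $V$ is a conformal gradient field in the sense required by that lemma; once this is established, the conclusion about isolated zeroes and the explicit form of the metric in a normal neighborhood is exactly what Kühnel--Rademacher provide.

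First, since $\varphi$ is a special generalized Killing spinor (with $A=\mu\,\id$) and we are in one of the four cases of Lemma \ref{closed-conformal-lemma}, that lemma ensures that $V$ is both closed and conformal, with $L_Vg$ a pointwise multiple of $g$. Closedness of $V$ means that the metrically equivalent $1$-form $V^\flat$ is closed as a differential form on $M$. Fix now a zero $p\in M$ of $V$. Take any contractible open neighborhood $\mathcal{O}$ of $p$ (for instance, a normal neighborhood of $p$). On $\mathcal{O}$ the closed $1$-form $V^\flat$ is exact by the Poincaré lemma, so there is a smooth function $\psi$ on $\mathcal{O}$ with $V\vert_{\mathcal{O}} = \grad \psi$. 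Thus $V\vert_{\mathcal{O}}$ is a nontrivial conformal gradient field on the semi-Riemannian manifold $(\mathcal{O}, g\vert_{\mathcal{O}})$ with zero at $p$.

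We now invoke part \ref{two} of Lemma \ref{Kuhnel} applied to the conformal gradient field $V\vert_{\mathcal{O}}=\grad\psi$: the zeroes of $V\vert_{\mathcal{O}}$ are isolated, and around $p$ there exists a normal neighborhood $U\subset T_pM$ on which, outside the light cone $C_p$, the metric takes exactly the stated warped form
\begin{equation*}
g_{(r,x)} \;=\; \epsilon\,dr^2 \;+\; \Bigl(\tfrac{\psi'_\epsilon(r)}{\psi''_\epsilon(0)}\Bigr)^2\tilde{g}_x,
\end{equation*}
with $\epsilon=\sgn g(V,V)$, $\tilde g$ the standard metric on the pseudo-sphere $\{x\in\mathbb{R}^n\mid\langle x,x\rangle_{r,n-r}=\epsilon\}$, and $\psi(r,x)=\psi_\epsilon(r)$, and this metric extends conformally flatly across $C_p$. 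Isolatedness of zeroes of $V$ globally on $M$ follows because the argument applies around every zero.

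I expect no serious obstacle. The only point that deserves a line of care is the passage from the global closedness of $V$ to a local primitive $\psi$ with $V=\grad\psi$, since Lemma \ref{Kuhnel} is phrased for gradient fields rather than for closed vector fields; but this is only the Poincaré lemma on a contractible neighborhood, and the Kühnel--Rademacher statement itself is local on a normal neighborhood of $p$, so there is no compatibility issue.
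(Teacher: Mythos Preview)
Your proposal is correct and follows exactly the paper's approach: the paper's proof is the single line ``This follows by Lemma \ref{closed-conformal-lemma} and Part \ref{two} of Lemma \ref{Kuhnel},'' and you have simply spelled out the intermediate step (Poincar\'e lemma on a contractible neighborhood to pass from closed to gradient) that the paper leaves implicit, having already remarked earlier that gradient and closed are locally the same.
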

\begin{proof}
    This follows by Lemma \ref{closed-conformal-lemma} and Part \ref{two} of Lemma \ref{Kuhnel}.
\end{proof}
\begin{remark}
    In case the Killing function is a non-zero constant, i.e. the spinor is a non-parallel Killing spinor, then we have globally that $V$ is a gradient vector field $V = \grad \psi$. A straightforward case by case computation of $\grad \beta$ yields that the function $\Psi$ can, up to addition by a constant, be computed as
    \begin{align}\label{formulas}
        \begin{tabular}{c|c|c|c}
             Case \ref{case-odd-1} & Case \ref{case-even-i} & Case \ref{case-modified-2}& Case \ref{case-modified} \\ \hline \rule{0pt}{13pt}
             $\frac{i^{r+1}\beta}{2\mu}$ & $-\frac{i^r \beta}{2\mu}$ & $\frac{\beta}{2\mu}$ &$-\frac{\beta}{2\mu}.$
        \end{tabular}
    \end{align}
\end{remark}
\vspace{2mm}
In case $E$ or $\tilde{E}$ is complete, we get the following improvement of Proposition \ref{local isometry at spinors}. 
\\
\begin{proposition}\label{global covering at spinors}
    Let $(M,g)$ be a connected time oriented semi-Riemannian $\text{spin}^c$ manifold of index $r$ and dimension $n$ carrying a generalized Killing spinor $\varphi$, with $|V|$ nowhere vanishing, such that any of the cases of Lemma \ref{closed-conformal-lemma} holds. For any leaf $L_p$, the Weingarten map $W^{\tilde{E}}$ is as in (\ref{Weingarten}). If $E$ is complete, then for any leaf $L_p$ we get that the map $\Phi^{L_p} : (\mathbb{R} \times L_p, \frac{dt^2}{g(V,V)} + h_t) \to (M,g)$, where $h_t = \Phi_t^*(g\vert_{L_{\Phi_t(p)}})$, is a normal, semi-Riemannian covering. If instead $\tilde{E}$ is complete and $\varphi$ is special with $A = \mu \id$, then for any leaf $L_p$, the map $\tilde{\Phi}^{L_p} : (\mathbb{R} \times L_p, \epsilon \hspace{0.26mm} dt^2 + f^2 g\vert_{L_p}) \to (M,g)$, where $f$ is as in (\ref{eff}), is a normal, semi-Riemannian covering. If in addition the function $\mu \beta$ is not constantly zero and either the function $\mu \beta$ or the function
    \vspace{2mm}
    \begin{itemize}
        \item
        $4(n-1)j^2\mu^2g(V,V) + 2\overline{j}i^{r+1}(nV(\mu)\esc{\varphi, \varphi} + \esc{\varphi, \grad(\mu) \cdot V \cdot \varphi})$  \text{in Case \ref{case-odd-1} and Case \ref{case-even-i}}
        \item $4(n-1)j^2\mu^2g(V,V) -2nV(\mu)\esc{\varphi, \overline{\varphi}} - 2\esc{\grad(\mu) \cdot V\cdot \varphi, \overline{\varphi}}$ \hspace{7.4mm} \text{in Case \ref{case-modified-2} and Case \ref{case-modified}}
    \end{itemize}
    \vspace{2mm}
    is non-negative or non-positive, then the map $\tilde{\Phi}^{L_p}$ is an isometry for any leaf $L_p$. 
\end{proposition}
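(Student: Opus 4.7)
The plan is to assemble this proposition by combining the local spinor computations in Proposition \ref{local isometry at spinors} with the global covering results of Proposition \ref{covering} and Corollary \ref{injectivity-cor}. The Weingarten formula (\ref{Weingarten}) is already established in Proposition \ref{local isometry at spinors}, so nothing new is needed for that part.

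For the covering statements, I would argue as follows. By Lemma \ref{closed-conformal-lemma}, $V$ is always a closed vector field with nowhere vanishing norm (by hypothesis), so the hypotheses of the first half of Proposition \ref{covering} are satisfied. Completeness of $E$ therefore gives directly that $\Phi^{L_p}$ is a normal semi-Riemannian covering with the stated warped metric. When $\varphi$ is in addition special with $A = \mu\id$, Lemma \ref{closed-conformal-lemma} upgrades $V$ to be both closed and conformal, so the second half of Proposition \ref{covering} applies with the explicit warping factor; unwinding the formula $f(t) = \exp\int_0^t \frac{\Div V}{n|V|}\,ds$ using the identity $\Div V = cn\mu\beta$ (established inside the proof of Proposition \ref{local isometry at spinors}) gives precisely the form (\ref{eff}).

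For the final isometry claim, I would invoke Corollary \ref{injectivity-cor}. The corollary requires $V$ closed and conformal with nowhere vanishing norm (satisfied), $\tilde{E}$ complete (satisfied by hypothesis), $\Div V$ not constantly zero, and either $\Div V$ or $\Ric(V,V)$ having a global sign. The translation of the non-vanishing condition on $\Div V$ is immediate since $\Div V = cn\mu\beta$. The translation of the sign condition on $\Div V$ to a sign condition on $\mu\beta$ is equally immediate. For the $\Ric(V,V)$ alternative, the explicit expressions in the four cases are precisely the right-hand sides computed for $\Ric(V,V)$ in the proof of Proposition \ref{local isometry at spinors} (obtained by feeding the curvature identity (\ref{curvature}) and the fact that $\omega$ is a $2$-form into the pairing $i^{r+1}\esc{\varphi, \Ric(V)\cdot\varphi}$ or its modified analogue). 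Thus the listed functions have a sign if and only if $\Ric(V,V)$ does, and Corollary \ref{injectivity-cor} yields that $\tilde{\Phi}^{L_p}$ is an isometry on every leaf.

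The main obstacle is essentially bookkeeping rather than new content: one must be careful to verify that the various case distinctions (different parities of $r$ and $n$, real versus imaginary $j$, and use of $V_\varphi$ versus $\overline{V}_\varphi$) match up coherently between Lemma \ref{closed-conformal-lemma}, Proposition \ref{local isometry at spinors}, and the generic closed/conformal machinery of Section 4. Once this matching is made explicit, the proof reduces to citing the three results above.
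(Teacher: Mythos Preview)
Your proposal is correct and follows essentially the same approach as the paper: the paper's proof also cites Proposition \ref{local isometry at spinors} for the Weingarten map, Lemma \ref{closed-conformal-lemma} together with Proposition \ref{covering} for the covering statements, and Corollary \ref{injectivity-cor} (with the $\Div V$ and $\Ric(V,V)$ computations carried out in the proof of Proposition \ref{local isometry at spinors}) for the isometry claim.
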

\begin{proof}
    The computation of the Weingarten map is done in Proposition \ref{local isometry at spinors}. The statements about the coverings follow from Lemma \ref{closed-conformal-lemma} and Proposition \ref{covering}. The injectivity condition is a translation of the conditions of Corollary \ref{injectivity-cor} and is in complete analogue to the proof of Proposition \ref{local isometry at spinors}. 
\end{proof}
The above results contain some geometric information of manifolds carrying generalized Killing spinors, but we are unable to draw conclusions about what type of spinor equation one can obtain on the fiber. However, when restricted to spin manifolds with Killing spinors, we have the following result due to Bohle \cite[Thereom 6 $\&$ Theorem 7]{Bohle}.
\vspace{2mm}
\begin{theorem}[Bohle, 2003, \cite{Bohle}]\label{bohle}
    A time oriented semi-Riemannian warped product spin manifold of the form $(I \times F, \epsilon \hspace{0.26mm} dt^2 + f^2g_F)$ admits a Killing spinor with Killing number $\lambda \in \{\pm \frac{1}{2}, 0, \pm \frac{i}{2}\}$ if and only if the warping function satisfies $f'' = -4\epsilon\lambda^2 f$ and the fiber $F$ admits a Killing spinor to the Killing number $\pm \lambda_F$, where $\lambda_F^2 = \lambda^2f^2 + \epsilon \frac{1}{4}(f')^2$.
\end{theorem}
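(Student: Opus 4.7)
The plan is to reduce the Killing spinor equation on the warped product $(I\times F,\epsilon\,dt^2+f^2 g_F)$ to a Killing-type equation on the fiber by separation of variables, after transporting the spinor bundle structure from $M$ to the slices $\{t\}\times F$.

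First, I would identify the spinor bundle of $M$ restricted to each slice with (one or two copies of) the spinor bundle of $(F,g_F)$, the choice depending on the parities of $\dim F$ and of the index $r$. The standard warped-product Levi-Civita formulas $\nabla_{\partial_t}\partial_t=0$, $\nabla_X\partial_t=\tfrac{f'}{f}X$, and $\nabla_X Y=\nabla^F_X Y-\epsilon f f'\,g_F(X,Y)\partial_t$ (for $X,Y\in TF$) lift to the spin bundle and yield
\begin{align*}
\nabla^{S}_{\partial_t}\varphi=\partial_t\varphi,\qquad \nabla^{S}_X\varphi=\widetilde{\nabla}^{S,F}_X\varphi-\tfrac{f'}{2f}\,\partial_t\cdot X\cdot\varphi,
\end{align*}
where $\widetilde{\nabla}^{S,F}$ denotes the pullback of the spin connection on $(F,g_F)$ under the chosen slice identification.

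Second, I would impose the Killing equation $\nabla^S_X\varphi=\lambda X\cdot\varphi$ in both directions. The $\partial_t$-equation becomes $\partial_t\varphi=\lambda\,\partial_t\cdot\varphi$, and since $(\partial_t\cdot)^2=-\epsilon$, decomposing $\varphi=\varphi_++\varphi_-$ into the eigenspinors of $\partial_t\cdot$ with eigenvalues $\pm\sqrt{-\epsilon}$ decouples it into two linear ODEs solved by exponentials $a_\pm(t)=e^{\pm\sqrt{-\epsilon}\,\lambda t}$. Substituting the resulting $t$-dependence into the horizontal equation then produces, for each slice, a Killing equation on $(F,g_F)$ for the slice value of $\varphi_\pm$ whose Killing number depends on $t$ only through $\lambda$, $f$, and $f'$. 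Requiring $t$-independence of this fiber Killing number (which must hold since the fiber spinor is constant in $t$) forces
\begin{align*}
\lambda_F^2=\lambda^2 f^2+\tfrac{\epsilon}{4}(f')^2,
\end{align*}
and differentiating in $t$ yields the ODE $f''=-4\epsilon\lambda^2 f$.

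For the converse direction one runs this backwards: given $f$ satisfying the ODE and a fiber Killing spinor $\chi$ with Killing number $\pm\lambda_F$, the ansatz $\varphi(t,x)=a(t)\chi(x)+b(t)\,\partial_t\cdot\chi(x)$ with $a,b$ solving the decoupled system above produces a Killing spinor on $M$, verified by direct substitution into the two connection formulas. The main obstacle is the sign and parity bookkeeping: the signature $\epsilon\in\{\pm 1\}$, the choice of identification $S_M\vert_{\{t\}\times F}\cong S_F$ or $S_F\oplus S_F$ (depending on the parity of $\dim F$ and on $r$), the two eigenvalues $\pm\sqrt{-\epsilon}$ of $\partial_t\cdot$, and the two signs $\pm\lambda_F$ interact nontrivially, so each parity case has to be handled separately to ensure that Clifford multiplications on $M$ and $F$ align and that the sign of $\lambda_F$ is tracked consistently across the ansatz.
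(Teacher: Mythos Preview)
The paper does not prove this theorem; it is quoted verbatim as a result of Bohle \cite[Theorem 6 \& Theorem 7]{Bohle} and then applied as a black box. So there is no proof in the paper to compare against.

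That said, your sketch is the standard approach and is essentially what Bohle does: identify the spinor bundle of the warped product along slices, use the warped-product Levi--Civita connection to split the Killing equation into a $\partial_t$-direction ODE and a fiber equation, decompose into eigenspinors of Clifford multiplication by $\partial_t$, and read off both the ODE $f''=-4\epsilon\lambda^2 f$ and the fiber Killing number $\lambda_F$ from compatibility. One small caution: your derivation of the constraint $\lambda_F^2=\lambda^2 f^2+\tfrac{\epsilon}{4}(f')^2$ by ``requiring $t$-independence'' is slightly backwards. The fiber Killing number you obtain from the horizontal equation will a priori be $t$-dependent; what you actually need is that the ODE $f''=-4\epsilon\lambda^2 f$ makes the quantity $\lambda^2 f^2+\tfrac{\epsilon}{4}(f')^2$ a first integral, hence constant, so the same fiber spinor works at every $t$. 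This is the correct logical order (ODE first, then constancy of $\lambda_F^2$), and it matters for the converse direction as well.
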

\vspace{2mm}
Note that by rescaling the metric, one can always assume that Killing numbers is of the above modulus. The function computed in (\ref{eff}) satisfies the equation $f'' = -4\epsilon (j\mu)^2 f$ in case the special generalized Killing spinor of Proposition \ref{local isometry at spinors} is a Killing spinor with Killing number $\lambda \in \{\pm \frac{1}{2}, 0, \pm \frac{i}{2}\}$. Using Theorem \ref{bohle}, we improve Proposition \ref{local isometry at spinors} and Proposition \ref{global covering at spinors} in the case of Killing spinors, by concluding that the fibers carries Killing spinors.
\vspace{2mm}
\begin{theorem}
    Let $(M,g)$ be a connected time oriented semi-Riemannian spin manifold carrying a Killing spinor $\varphi$ having Killing number $\lambda \in \{\pm\frac{1}{2}, 0, \pm\frac{i}{2}\}$, with $|V|$ nowhere vanishing such that any of the cases of Lemma \ref{closed-conformal-lemma} holds. Then for any uniform pair $(L_p', I)$ for $\tilde{\Phi}$, we have that the map $\tilde{\Phi}^{L_p'} : (I \times L_p', \epsilon \hspace{0.26mm} dt^2 + f^2 g\vert_{L_p'}) \to (M,g)$, where $f$ is as in (\ref{eff}), is an isometry onto its image. The fiber $(L_p', g\vert_{L_p'})$ carries a Killing spinor with Killing number $\pm\lambda_{L_p'}$, where $\lambda_{L_p'}^2 = \lambda^2f^2 + \epsilon \frac{1}{4}(f')^2$. 
    \\
    \\
    If $\tilde{E}$ is complete, then for any leaf $L_p$, the map $\tilde{\Phi}^{L_p} : (\mathbb{R} \times L_p, \epsilon \hspace{0.26mm} dt^2 + f^2 g\vert_{L_p}) \to (M,g)$, where $f$ is as in (\ref{eff}), is an isometry. The fiber $(L_p, g\vert_{L_p})$ carries a Killing spinor with Killing number $\pm\lambda_{L_p}$ satisfying $\lambda_{L_p}^2 = \lambda^2f^2 + \epsilon \frac{1}{4}(f')^2$.
\end{theorem}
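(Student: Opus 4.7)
The plan is to combine Proposition~\ref{local isometry at spinors} (and Proposition~\ref{global covering at spinors} in the complete setting) with Bohle's Theorem~\ref{bohle}, exploiting that a Killing spinor has constant Killing function $\mu$ with $\lambda = j\mu$.

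For the isometry-onto-image statement, the key observation is that since $\mu$ is constant, $\grad(\mu)=0$ and $V(\mu)=0$, so in each of the cases of Lemma~\ref{closed-conformal-lemma} the second injectivity-function of Proposition~\ref{local isometry at spinors} collapses to $4(n-1)j^2\mu^2\, g(V,V)$. Connectedness of $M$ together with the nowhere-vanishing of $|V|$ forces $\mathrm{sign}\, g(V,V)$ to be constant on $M$; since $j^2 \in \{\pm 1\}$ and $\mu^2$ are constants, this function has a definite sign throughout $\tilde{\Phi}(I \times L_p')$. Therefore the sign hypothesis of Proposition~\ref{local isometry at spinors} is automatic, and provided $\beta_p \neq 0$ (so that $(\mu\beta)_p \neq 0$) the proposition yields injectivity of $\tilde{\Phi}^{L_p'}$, upgrading the local isometry to an isometry onto its image. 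In the complete case the analogous sign argument verifies the hypothesis of Corollary~\ref{injectivity-cor}, and Proposition~\ref{global covering at spinors} promotes the resulting normal covering to a global isometry.

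Next I would produce the fiber Killing spinor by applying Bohle's Theorem~\ref{bohle}. The remark preceding the statement of the theorem already records that $f(t)=\exp\int_0^t \mu\tilde{\beta}\, ds$ satisfies $f'' = -4\epsilon(j\mu)^2 f = -4\epsilon\lambda^2 f$, which is exactly Bohle's warping condition for Killing number $\lambda$. The isometry from the preceding paragraph transplants $\varphi$ to a Killing spinor on the warped product $(I \times L_p', \epsilon\, dt^2 + f^2 g|_{L_p'})$ with Killing number $\lambda$, and Bohle's theorem then produces a Killing spinor on the fiber $(L_p', g|_{L_p'})$ with Killing number $\pm\lambda_{L_p'}$ satisfying $\lambda_{L_p'}^2 = \lambda^2 f^2 + \epsilon\tfrac14 (f')^2$. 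The complete case runs verbatim with $L_p'$ replaced by $L_p$ and $I$ replaced by $\mathbb{R}$.

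The main obstacle is the degenerate subcase $\mu = 0$, i.e.\ when $\varphi$ is a parallel spinor: then $V$ is parallel, $f \equiv 1$, and $(\mu\beta)_p = 0$ makes the injectivity conclusion of Proposition~\ref{local isometry at spinors} unavailable. I would handle this separately by exploiting that $\tilde{E}$ is now a parallel unit vector field, so the map $\tilde{\Phi}^{L_p'}$ is an isometric immersion into a local semi-Riemannian direct product and the fiber Killing spinor is simply the restriction of $\varphi$, which is parallel along every leaf (consistent with $\lambda_{L_p'} = 0$ when $\lambda = 0$ and $f' \equiv 0$). Injectivity in this situation has to be checked directly on the given uniform pair, using that the flow of a parallel unit field acts by isometries preserving each leaf.
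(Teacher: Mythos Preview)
Your argument is essentially the paper's: invoke Proposition~\ref{local isometry at spinors} for the local isometry, note that for constant $\mu$ the Ricci-type injectivity function reduces to $4(n-1)\lambda^2\, g(V,V)$, use its definite sign on the connected $M$ to obtain injectivity, and then apply Bohle's Theorem~\ref{bohle} for the fiber Killing spinor; the complete case is handled identically via Proposition~\ref{global covering at spinors}.

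You are more careful than the paper in flagging the side condition $\beta_p \neq 0$ and the degenerate case $\lambda = 0$; the paper simply asserts that $4(n-1)\lambda^2 g(V,V)$ is ``either positive or negative'' and invokes the last clause of Proposition~\ref{local isometry at spinors} without further comment. For $\lambda \neq 0$ the $\beta_p$ issue can be repaired, since one computes $\grad\beta$ to be a nonzero multiple of $V$, so $\beta$ vanishes only along hypersurfaces and a foliation-translation argument as in the proof of Theorem~\ref{uniform-pair-riemannian} reduces to a leaf with $\beta \neq 0$. Your proposed handling of $\lambda = 0$, however, does not go through: that the flow of a parallel unit vector field acts by leaf-preserving isometries does not force injectivity on an arbitrary uniform pair (think of a flat product $S^1 \times N$ with $\tilde{E} = \partial_\theta$ and the spin structure on $S^1$ admitting parallel spinors, where the flow is periodic). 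Thus for $\lambda = 0$ the conclusion ``isometry onto its image for every uniform pair'' is not established by either argument and should be read as tacitly excluding the parallel case.
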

\begin{proof}
    By proposition \ref{local isometry at spinors} we get that $\tilde{\Phi}^{L_p'}$ is a local isometry. However, since the function $4(n-1)\lambda^2g(V,V)$, is either positive or negative, we have by the last condition of Proposition \ref{local isometry at spinors} that $\tilde{\Phi}^{L_p'}$ is in fact an isometry. The statement regarding the Killing spinor on the fiber follows from Theorem \ref{bohle}. For the case when $\tilde{E}$ is complete, one argues identically, using Proposition \ref{global covering at spinors}.
\end{proof}
In the Lorentzian case, we have stronger results due to Bohle \cite[Section 5]{Bohle} and Leitner \cite[Theorem 5.3]{Leitner_2003}. In the Riemannian case, the global statement is essentially the same as due to Baum in \cite{Baum89}. The author is not aware of similar local statements in the Riemannian case but is sure that they are known to experts. There are also stronger results for $r =2$, due to Shafiee and Bahrampour \cite{shafiee_Bahrampour}. In case of parallel spinors we have stronger results in \cite{Friedrich-paralleler, baum-kath, Wang, Hitchin-harmonic}.

\subsection{The Riemannian classification}
In this section, we specialize to the Riemannian setting, showing how one can generalize previous classifications made by Gro{\ss}e and Nakad \cite{GrosseNakad}, and Leistner and Lischewski \cite{Leistner-Lischewski}. 

\subsubsection{Type I imaginary generalized Killing spinors}
We say that an imaginary generalized Killing spinor $\varphi$ on a Riemannian $\text{spin}^c$ manifold is of \textit{type I} if the locally constant function $Q_- = \esc{\varphi, \varphi}^2 - g(V_{\varphi}, V_{\varphi})$ equals $0$ everywhere. If $Q_-$ does not equal $0$, then we say that $\varphi$ is of \textit{type II}. By the Cauchy-Schwarz inequality, we always have that $Q_-$ is non-negative. It is a general fact that generalized Killing spinors have no zeroes. Indeed, a generalized Killing spinor is parallel with respect to the modified connection $\tilde{\nabla}^S_X \psi:= \nabla^S_{X}\psi - jA(X) \cdot \psi$ and thus if $\varphi$ had one zero, it would be zero throughout by $\tilde{\nabla}^S$-parallel transport. With this, we see in the Riemannian case that type I imaginary generalized Killing spinors are such that $|V_{\varphi}| \neq 0$. This is the first reason why we are able to classify type I imaginary generalized Killing spinors. As in Section \ref{semi-rieman-set}, let $\Phi$ be the flow of $E := \frac{V_{\varphi}}{|V_{\varphi}|^2}$, let $\tilde{\Phi}$ to be the flow of $\tilde{E} := \frac{V_{\varphi}}{|V_{\varphi}|}$, and let $ \mathcal{F} = \{L_p\}_{p \in M}$ be the foliation induced by $E^{\perp}$. Recall from Proposition \ref{foliated} that $\Phi$ preserves the foliation $\mathcal{F}$ and if $\varphi$ is special, then $\tilde{\Phi}$ also preserves the foliation $\mathcal{F}$. The following lemma shows the second reason why we are able to classify type I imaginary generalized Killing spinors. The proof of the following lemma was pointed out to me by Jonathan Glöckle and is essentially the same computation as in section 1.3 of \cite{Glöckle-thesis}.
\vspace{2mm}
\begin{lemma}\label{type I and constraint}
    Given a spinor field $\varphi$ on a Riemannian $\text{spin}^c$ manifold such that $\esc{\varphi, \varphi}^2 - g(V_{\varphi}, V_{\varphi}) = 0$ everywhere, we also have that
    \begin{align}\label{algebraic constraint}
        V_{\varphi} \cdot \varphi = i|V_{\varphi}| \varphi.
    \end{align}
    Conversely, if $\varphi$ is an imaginary generalized Killing spinor which is not parallel on any component of $M$, and satisfies equation (\ref{algebraic constraint}), then $\varphi$ is of type I.
\end{lemma}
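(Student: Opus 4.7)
The plan is to establish both directions via a single algebraic identity: the squared norm of the spinor $\psi := V_{\varphi} \cdot \varphi - i|V_{\varphi}|\varphi$ collapses, after a short Clifford computation, to
\[
  \esc{\psi,\psi} \;=\; 2|V_{\varphi}|^{2}\bigl(\esc{\varphi,\varphi} - |V_{\varphi}|\bigr).
\]
This uses only the Clifford relation $V_\varphi \cdot V_\varphi \cdot \varphi = -g(V_\varphi, V_\varphi)\varphi$, the Riemannian symmetry rule $\esc{X\cdot\alpha, \beta} = -\esc{\alpha, X\cdot\beta}$ from item \ref{bundle metric}, and the definition of the Dirac current, which at $X = V_\varphi$ yields $\esc{\varphi, V_\varphi \cdot \varphi} = -i|V_\varphi|^{2}$.

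For the forward direction, the hypothesis $\esc{\varphi,\varphi}^{2} = g(V_\varphi, V_\varphi) = |V_\varphi|^{2}$ together with the non-negativity of $\esc{\varphi,\varphi}$ in the Riemannian setting gives $\esc{\varphi,\varphi} = |V_\varphi|$. Hence $\esc{\psi,\psi} = 0$, and positive-definiteness of the bundle metric forces $\psi = 0$, which is the claimed constraint.

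For the converse, the hypothesis makes $\psi = 0$, so the identity above gives $|V_\varphi|^{2}\bigl(\esc{\varphi,\varphi} - |V_\varphi|\bigr) = 0$ pointwise. Since $Q_{-} = \esc{\varphi,\varphi}^{2} - g(V_\varphi, V_\varphi)$ is locally constant in Case \ref{case-even-i} of Lemma \ref{closed-conformal-lemma} (the relevant Riemannian case with $j=i$), it suffices to exhibit, on each connected component of $M$, a single point where $V_\varphi \neq 0$; at such a point $\esc{\varphi,\varphi} = |V_\varphi|$ and therefore $Q_{-} = 0$ on the whole component. If, on the contrary, $V_\varphi \equiv 0$ on some component $C$, Lemma \ref{closed-conformal-lemma} gives $0 = \nabla_X V_\varphi = c\esc{\varphi,\varphi}A(X)$ throughout $C$; since generalized Killing spinors are nowhere vanishing (being parallel for the modified connection $\tilde{\nabla}^{S}$ introduced just before the statement) and $c = -2 \neq 0$, this forces $A \equiv 0$ on $C$, hence $\nabla^{S}\varphi = 0$ on $C$, contradicting the hypothesis that $\varphi$ is not parallel on any component.

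The only real work is the collapse of $\esc{\psi,\psi}$ to the displayed expression, which is mechanical Clifford bookkeeping. Beyond that the argument is straightforward, and I anticipate no serious obstacle; the conceptual content is simply that the constraint (\ref{algebraic constraint}) is equivalent to $\psi = 0$, and that the "not parallel" assumption rules out the degenerate locus where $V_\varphi$ itself vanishes.
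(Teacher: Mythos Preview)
Your proposal is correct and follows essentially the same approach as the paper: both establish the identity $\esc{\psi,\psi} = 2|V_\varphi|^2(\esc{\varphi,\varphi} - |V_\varphi|)$ and use positive-definiteness plus local constancy of $Q_-$ in the two directions. The only difference is in the converse: where the paper rules out $V_\varphi \equiv 0$ on a component by directly computing $Y(g(V_\varphi, A(\tilde Y)))$ at a point with $A(Y)\neq 0$, you instead invoke the formula $\nabla_X V_\varphi = c\esc{\varphi,\varphi}A(X)$ from Lemma~\ref{closed-conformal-lemma} to conclude $A\equiv 0$ globally on that component; this is a minor repackaging of the same computation, not a different argument.
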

\begin{proof}
    We compute that:
    \begin{align}\label{J-computation}
        \esc{V_{\varphi} \cdot \varphi - i|V_{\varphi}| \varphi, V_{\varphi} \cdot \varphi - i|V_{\varphi}| \varphi} &= \esc{V_{\varphi} \cdot \varphi, V_{\varphi} \cdot \varphi} -2 \hspace{1mm}\text{Re}\esc{V_{\varphi} \cdot \varphi, i|V_{\varphi}| \varphi} + \esc{i|V_{\varphi}| \varphi, i|V_{\varphi}| \varphi} \nonumber\\
        &= g(V_{\varphi}, V_{\varphi})\esc{\varphi, \varphi} -2i|V_{\varphi}|\esc{\varphi, V_{\varphi} \cdot \varphi} + |V_{\varphi}|^2 \esc{\varphi, \varphi}  \nonumber\\
        &= 2g(V_{\varphi}, V_{\varphi})\esc{\varphi, \varphi} -2|V_{\varphi}|g(V_{\varphi}, V_{\varphi}) \nonumber\\
        &= 2|V_{\varphi}|^2(\esc{\varphi, \varphi} - |V_{\varphi}|).
    \end{align}
    First, assume that $\varphi$ is a type I imaginary generalized Killing spinor. Then (\ref{algebraic constraint}) follows because both $g(\cdot, \cdot)$ and $\esc{\cdot, \cdot}$ are positive definite. Suppose now that $\varphi$ is an imaginary generalized Killing spinor which is not parallel on any component of $M$ such that $V_{\varphi} \cdot \varphi = i|V_{\varphi}| \varphi$. Then, for each point $x \in M$, we have by (\ref{J-computation}), that either $V_{\varphi}(x) = 0$, or that $\esc{\varphi, \varphi}(x) - |V_{\varphi}|(x) = 0$. Since the function $\esc{\varphi, \varphi}^2 - g(V_{\varphi}, V_{\varphi})$ is locally constant, it suffices to show that the set $N := \{x \in M \mid V_{\varphi}(x) = 0\}$ does not contain any component of $M$. Suppose for a contradiction that $N$ contains some component $M_1$ of $M$. By assumption, there is some point $x \in M_1$ and a vector $Y \in T_xM_1$ such that $A(Y) \neq 0$. Pick any local extension $\tilde{Y}$ of $Y$ to compute that
    \begin{align*}
        0 &= Y(g(V_{\varphi}, A(\tilde{Y})) = Y(i\esc{\varphi, A(\tilde{Y}) \cdot \varphi}) \\
        &= i(\esc{\nabla_Y \varphi, A(\tilde{Y}) \cdot \varphi} + \esc{\varphi, \nabla_Y A(\tilde{Y}) \cdot \varphi} + \esc{\varphi, A(\tilde{Y}) \cdot \nabla_Y \varphi}) \\
        &=i(\esc{iA(Y) \cdot \varphi, A(Y) \cdot \varphi} + \esc{\varphi, A(Y) \cdot iA(Y) \cdot \varphi}) \\
        &= 2\esc{\varphi, A(Y) \cdot A(Y) \cdot \varphi} = -2g(A(Y), A(Y)) \esc{\varphi, \varphi}_x,
    \end{align*}
    and since generalized Killing spinors have no zeroes, we conclude that $A(Y) =0$ which establishes the contradiction and completes the proof.
\end{proof}
\begin{remark}
    To see that the assumption that $\varphi$ is not allowed to be parallel on any component of $M$ in Lemma \ref{type I and constraint} is necessary, consider a complete and simply connected spin manifold $(M,g)$ that carries a parallel spinor and has irreducible holonomy. In this case, the Dirac current would have to vanish identically, because otherwise the manifold $(M,g)$ would carry a nowhere vanishing parallel vector field and the holonomy of $(M,g)$ would be reducible. Hence the Dirac current vanishes but $\esc{\varphi, \varphi}$ does not, and so $\varphi$ is of type II but satisfies equation (\ref{algebraic constraint}) trivially.
\end{remark}
\vspace{2mm}
\begin{remark}
    Baum introduced the terms type I and type II imaginary Killing spinors in \cite{Baum89}. It is shown in \cite{Baum89} and \cite{Baum88} that a type I imaginary Killing spinor on a Riemannian spin manifold satisfies (\ref{algebraic constraint}). This proof can be adapted to imaginary generalized Killing spinors as well, but is very different from the above proof of Lemma \ref{type I and constraint}. In \cite{Leistner-Lischewski}, Leistner and Lischewski uses equation (\ref{algebraic constraint}) directly.
\end{remark}
\vspace{2mm}
In what follows, we will apply the $\text{spin}^c$ Gau{\ss} formula from \cite[Section 3]{Nakad2011}. We follow the setup of that paper. Given a leaf $L_p \in \mathcal{F}$, we equip it with the induced $\text{spin}^c$ structure and we write $S_{L_p}$ to denote the induced spinor bundle on the hypersurface $L_p$.
When the dimension $n$ of $M$ is odd, we have that $S_{L_p} \cong S\vert_{L_p}$ and when $n$ is even, we have that $S_{L_p} \cong S^+\vert_{L_p}$. Hence, when $n$ is odd, we can simply write $\psi\vert_{L_p}$ to denote the restriction of a section, but when $n$ is even we write $\psi\vert_{L_p}$, to mean $\psi^+\vert_{L_p}$. In this setup, the $S^1$-bundle associated with the $\text{spin}^c$ structure on $L_p$ is simply the restriction $P_{S^1}\vert_{L_p}$, where $P_{S^1}$ is the $S^1$-bundle associated to the $\text{spin}^c$ structure on $M$. Hence, given a connection 1-form $A \in \Omega^1(P_{S^1}, i\mathbb{R})$, we get induced connections $\nabla^S$ on $S$ and $\nabla^{S_{L_p}}$ on $S_{L_p}$. If we let $\bullet : T(L_p) \otimes S_{L_p} \to S_{L_p}$ denote the Clifford multiplication on $S_{L_p}$, we have that $X \bullet \eta = \tilde{E} \cdot X \cdot \eta$, where on the right hand side, $\eta$ is either viewed as an element of $S\vert_{L_p}$ or $S^+\vert_{L_p}$, depending on if $n$ is odd or even. 
\vspace{2mm}
\begin{remark}
    It is important to point out that the above identifications of spinor bundles and Clifford multiplication on hypersurfaces is not the only one. It boils down to the choice of inclusion $C\ell_{n-1} \to C\ell_n$ of Clifford algebras. The choice which is made in \cite{Nakad2011} is the inclusion $e_j \to e_n \cdot e_j$, where $\{e_1, \hdots, e_{n}\}$ is the standard basis for $\mathbb{R}^n$ and thus also the set of standard generators for $C\ell_n$. Another choice that can be made is to choose the inclusion $e_j \to e_j$. The second choice would have worked equally well for the main results of this article. However, the below formula (\ref{spinc-gauss}) would have had to been adjusted accordingly. 
\end{remark}
\vspace{2mm}
Translating Proposition 3.3 of \cite{Nakad2011}, to our notation, we get for any spinor-field $\psi \in \Gamma(S)$ and any vector field $X \in \Gamma(T(L_p))$ that
\begin{align}\label{spinc-gauss}
    \nabla^{S_{L_p}}_X(\psi\vert_{S_{L_p}}) = (\nabla^S_X\psi)\vert_{L_p} + \frac{1}{2}W^{\tilde{E}}(X) \bullet \psi\vert_{L_p}.
\end{align}
With this, we use (\ref{Weingarten}) to compute for any type I imaginary generalized Killing spinor $\varphi$ on a Riemannian $\text{spin}^c$ manifold, that the spinor $\varphi\vert_{L_p}$, satisfies
\begin{align}\label{restriction-computation}
        \nabla^{S_{L_p}}_X\varphi\vert_{L_p} &= (\nabla^S_X\varphi)\vert_{L_p} + \frac{1}{2}W^{\tilde{E}}(X) \bullet \varphi\vert_{L_p} \nonumber \\
        &=(iA(X) \cdot \varphi)\vert_{L_p} + \frac{1}{2}(\tilde{E} \cdot W^{\tilde{E}}(X) \cdot \varphi)\vert_{L_p} \nonumber\\
        &=(iA(X) \cdot \varphi)\vert_{L_p} + (\tilde{E} \cdot (A(X) - g(A(X), \tilde{E})\tilde{E}) \cdot \varphi)\vert_{L_p} \nonumber\\
        &= (iA(X) \cdot \varphi)\vert_{L_p} - ((A(X) - g(A(X), \tilde{E})\tilde{E}) \cdot \tilde{E} \cdot \varphi)\vert_{L_p} \nonumber\\
        &=(iA(X) \cdot \varphi)\vert_{L_p} - (A(X) \cdot \overset{= i\varphi}{\overbrace{\tilde{E} \cdot \varphi}} + g(A(X), \tilde{E})\varphi )\vert_{L_p} \nonumber\\
        &= -g(A(X), \tilde{E})\varphi\vert_{L_p}.
\end{align}
Now, computing as in Lemma \ref{closed-conformal-lemma}, we see that $X(\esc{\varphi, \varphi}) = -2|V_{\varphi}|g(A(X), \tilde{E}) = -2\esc{\varphi, \varphi}g(A(X), \tilde{E})$, since $\varphi$ is of type I. We have obtained the formula:
\begin{align}\label{fiber-deriv}
    \frac{X(\esc{\varphi, \varphi})}{2\esc{\varphi, \varphi}}\varphi\vert_{L_p} = -g(A(X), \tilde{E})\varphi\vert_{L_p} =  \nabla^{S_{L_p}}_X\varphi\vert_{L_p}.
\end{align}
\begin{lemma}\label{parallel-fiber}
    Let $\varphi$ be a type I imaginary generalized Killing spinor on a Riemannian $\text{spin}^c$ manifold. Then the spinor $\esc{\varphi, \varphi}^{-\frac{1}{2}}\varphi\vert_{L_p}$ is $\nabla^{S_{L_p}}$-parallel, for any leaf $L_p \in \mathcal{F}$. 
\end{lemma}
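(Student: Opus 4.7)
The proof will be a short, direct computation built on the identity (\ref{fiber-deriv}) already derived in the excerpt. The plan is to apply the Leibniz rule to the section $f\,\varphi\vert_{L_p}$ with $f := \esc{\varphi,\varphi}^{-1/2}$, and observe that the derivative of the rescaling factor cancels exactly the right-hand side of (\ref{fiber-deriv}).

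First I would verify that $f$ is a well-defined smooth function on (a neighborhood of) $L_p$: since the Riemannian bundle metric $\esc{\cdot,\cdot}$ is positive definite and, as recalled in the excerpt, generalized Killing spinors have no zeros (they are parallel with respect to the modified connection $\tilde\nabla^S$), we have $\esc{\varphi,\varphi} > 0$ on all of $M$, so $f = \esc{\varphi,\varphi}^{-1/2} \in C^\infty(M)$. In particular, $X(f) = -\tfrac{1}{2}\esc{\varphi,\varphi}^{-3/2}\, X(\esc{\varphi,\varphi})$ for any $X \in \Gamma(T L_p)$.

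Next, for $X \in \Gamma(T L_p)$ I would use that $\nabla^{S_{L_p}}$ satisfies the Leibniz rule with respect to multiplication by smooth functions and apply (\ref{fiber-deriv}) to compute
\begin{align*}
\nabla^{S_{L_p}}_X\bigl(f\,\varphi\vert_{L_p}\bigr)
&= X(f)\,\varphi\vert_{L_p} + f\,\nabla^{S_{L_p}}_X\varphi\vert_{L_p} \\
&= -\tfrac{1}{2}\,\esc{\varphi,\varphi}^{-3/2}\,X(\esc{\varphi,\varphi})\,\varphi\vert_{L_p}
   + \esc{\varphi,\varphi}^{-1/2}\cdot\frac{X(\esc{\varphi,\varphi})}{2\,\esc{\varphi,\varphi}}\,\varphi\vert_{L_p} \\
&= 0.
\end{align*}
This establishes the claim.

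There is no real obstacle here: the substance of the proof is contained in the displayed computation (\ref{restriction-computation}) that yielded (\ref{fiber-deriv}), and the present lemma is simply the observation that this pointwise proportionality between $\nabla^{S_{L_p}}_X\varphi\vert_{L_p}$ and $\varphi\vert_{L_p}$ is precisely what is needed for the Leibniz rule to annihilate the normalized spinor $\esc{\varphi,\varphi}^{-1/2}\,\varphi\vert_{L_p}$. The only subtlety worth flagging is the choice of normalization factor $\esc{\varphi,\varphi}^{-1/2}$ (rather than some other power), which is dictated by the factor of $\tfrac{1}{2}$ appearing in (\ref{fiber-deriv}); any other power would leave a nonzero residual term.
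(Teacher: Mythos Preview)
Your proposal is correct and is essentially identical to the paper's own proof: both apply the Leibniz rule to $\esc{\varphi,\varphi}^{-1/2}\varphi\vert_{L_p}$ and use (\ref{fiber-deriv}) to see that the two terms cancel. Your additional remark that $\esc{\varphi,\varphi}>0$ (so the rescaling is well defined) is a welcome clarification but not a departure from the paper's argument.
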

\begin{proof}
    We compute that
    \begin{align*}
        \nabla^{S_{L_p}}_X\esc{\varphi, \varphi}^{-\frac{1}{2}}\varphi\vert_{L_p} &= X(\esc{\varphi, \varphi}^{-\frac{1}{2}})\varphi\vert_{L_p} + \esc{\varphi, \varphi}^{-\frac{1}{2}}\nabla^{S_{L_p}}_X\varphi\vert_{L_p} \\
        &\overset{(\ref{fiber-deriv})}{=}-\frac{X(\esc{\varphi, \varphi})}{2\esc{\varphi, \varphi}^{3/2}}\varphi\vert_{L_p} + \esc{\varphi, \varphi}^{-\frac{1}{2}}\frac{X(\esc{\varphi, \varphi})}{2\esc{\varphi, \varphi}}\varphi\vert_{L_p} = 0,
    \end{align*}
    and complete the proof.
\end{proof}
\begin{remark}\label{restriction remark}
    By (\ref{restriction-computation}), we see that the condition for a type I imaginary generalized Killing spinor to restrict to a $\nabla^{S_{L_p}}$-parallel spinor on $L_p$ is that $g(X, A(\tilde{E})) = 0$, for all $X \in T(L_p)$. In case $\varphi$ in Lemma \ref{parallel-fiber} is special, then this condition is satisfied and so $\varphi$ restricts to a $\nabla^{S_{L_p}}$-parallel spinor on $L_p$. This is the parallel spinor found in the proof of Theorem 4.1 of \cite{GrosseNakad}. 
\end{remark}
\vspace{2mm}
We can now state and prove our main theorems.
\vspace{2mm}
\begin{theorem}\label{uniform-pair-riemannian}
    Let $(M,g)$ be a Riemannian $\text{spin}^c$ manifold carrying a type I imaginary generalized Killing spinor $\varphi$. 
    \vspace{1mm}
\begin{enumerate}[label=\alph*)]
        \item For any uniform pair $(L_p', I)$ for $\Phi$, the     map $\Phi^{L_p'} : (I \times L_p', \frac{dt^2}{g(V_{\varphi},V_{\varphi})} + h_t) \to (M,g)$ 
        %\par\vspace{0.3mm}\noindent % Here there is some special sorcery going on to make it more readable.
        is a local isometry, where $h_t:= \Phi_t^*(g\vert_{L_{\Phi_t(p)}})$ is a family of metrics on $L_p'$, such that $(L_p', h_t)$ carries a parallel spinor for each $t$.
\end{enumerate}
\vspace{1mm}
Suppose in addition that $\varphi$ is special with Killing function $i\mu$.
\vspace{1mm}
\begin{enumerate}[label=\alph*)]
    \setcounter{enumi}{1} % 0 = a), 1 = b), etc.
    \item For any uniform pair $(L_p', I)$ for $\tilde{\Phi}$, the map $\tilde{\Phi}^{L_p'} : (I \times L_p', dt^2 + e^{-4\int_{0}^{t}\mu ds} g\vert_{L_p}) \to (M,g)$ is a local isometry and $\varphi$ restricts to a parallel spinor on $(L_p', g\vert_{L_p'})$. If $\mu$ is not identically zero on the image of $\tilde{\Phi}^{L_p'}$ and is either non-negative or non-positive on the image of $\tilde{\Phi}^{L_p'}$, then the local isometry $\tilde{\Phi}^{L_p'}$ is an isometry onto its image.
\end{enumerate}  
\end{theorem}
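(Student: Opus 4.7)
The plan is to specialize the machinery already developed -- Proposition~\ref{local_isometry}, Proposition~\ref{local isometry at spinors}, and the hypersurface computation leading to Lemma~\ref{parallel-fiber} -- to the Riemannian type~I situation. First I would observe that the hypothesis places us in Case~\ref{case-even-i} of Lemma~\ref{closed-conformal-lemma} with $r=0$, $j=i$, so $\beta = \esc{\varphi,\varphi}$ and $c=-2$. The type~I condition $Q_- = 0$ forces $|V_{\varphi}| = \esc{\varphi,\varphi}$, which is nowhere vanishing since generalized Killing spinors have no zeroes; in particular $\tilde\beta = c\beta/|V_{\varphi}| = -2$. With this identification, the local isometry statement in~(a) is precisely Proposition~\ref{local_isometry} with $\epsilon = 1$, and for~(b) the warping function of Proposition~\ref{local isometry at spinors} becomes
\begin{equation*}
f(t) \;=\; \exp\!\int_{0}^{t} \mu\tilde\beta\,ds \;=\; e^{-2\int_{0}^{t}\mu\,ds},
\end{equation*}
yielding the announced $f^2 = e^{-4\int_{0}^{t}\mu\,ds}$.

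For the parallel spinor claims, the strategies differ between the two parts. In~(a), by construction of $h_t$ the map $\Phi_t : (L_p', h_t) \to (L_{\Phi_t(p)}, g\vert_{L_{\Phi_t(p)}})$ is a Riemannian isometry, so it lifts to an isomorphism of the induced spin$^c$ structures intertwining the spinor connections. Applying Lemma~\ref{parallel-fiber} on the leaf $L_{\Phi_t(p)}$ produces the parallel spinor $\esc{\varphi,\varphi}^{-1/2}\varphi\vert_{L_{\Phi_t(p)}}$, whose pullback along $\Phi_t$ is the parallel spinor on $(L_p', h_t)$. In~(b), no rescaling is needed: since $A = \mu\,\id$, we have $g(X, A(\tilde E)) = \mu\,g(X, \tilde E) = 0$ for every $X$ tangent to a leaf, and formula~(\ref{restriction-computation}) then directly gives $\nabla^{S_{L_p'}}_X \varphi\vert_{L_p'} = 0$. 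Hence $\varphi$ itself (without conformal rescaling) restricts to a parallel spinor on $(L_p', g\vert_{L_p'})$, as was already noted in Remark~\ref{restriction remark}.

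For the injectivity assertion, I would invoke Proposition~\ref{injectivity} through its spinorial reformulation in Proposition~\ref{local isometry at spinors}. A direct computation gives $\Div V_{\varphi} = c\beta\,\tr(A) = -2n\mu\esc{\varphi,\varphi}$, and since $\esc{\varphi,\varphi}>0$ in the Riemannian case, $\Div V_{\varphi}$ has sign opposite to $\mu$ and vanishes exactly where $\mu$ does. Thus sign-definiteness of $\mu$ on the image of $\tilde\Phi^{L_p'}$ translates directly to sign-definiteness of $\Div V_{\varphi}$ there, while non-triviality of $\mu$ on the image produces a leaf in the image on which $\Div V_{\varphi}\neq 0$. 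Because $\mu\beta$ is constant along the leaves of $\mathcal{F}$ in the special case (as used in the proof of Proposition~\ref{local_isometry}), a small reparametrization of the uniform pair so that its basepoint lies on such a leaf puts us in the hypotheses of Proposition~\ref{injectivity}, which then yields the stated injectivity. The only real obstacle I foresee is bookkeeping -- matching constants and signs and handling the reparametrization step -- since the genuinely new spinorial input of the theorem has already been absorbed into Lemma~\ref{parallel-fiber} and Remark~\ref{restriction remark}.
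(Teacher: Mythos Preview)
Your proposal is correct and follows essentially the same approach as the paper: invoke Proposition~\ref{local isometry at spinors}, Lemma~\ref{parallel-fiber}, and Remark~\ref{restriction remark} for the local isometry and parallel spinor statements, and handle injectivity by shifting the basepoint of the uniform pair to a leaf where $\mu\beta\neq 0$ so that Proposition~\ref{injectivity} applies. Your bookkeeping on the constants ($c=-2$, $\tilde\beta=-2$, hence $f^2=e^{-4\int_0^t\mu\,ds}$) and the translation of the sign condition on $\mu$ into the sign condition on $\Div V_\varphi$ are exactly what the paper leaves implicit.
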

\begin{proof}
    Everything except the last sentence follows directly from Proposition \ref{local isometry at spinors}, Lemma \ref{parallel-fiber} and Remark \ref{restriction remark}. To prove the last sentence, take a leaf $\Lambda \in \mathcal{F}$ that have non-trivial intersection with the image of $\tilde{\Phi}^{L_p'}$ and such that $\mu\vert_{\Lambda} \neq 0$. Since the flow of $\tilde{\Phi}$ preserves the foliation, there exists a $t_0 \in I$ such that $\tilde{\Phi}(t_0, L_p') \subset \Lambda$. The pair $(\tilde{\Phi}(t_0, L_p'), I - t_0)$, where $(a,b) - t_0 := (a- t_0, b-t_0)$, is uniform for $\tilde{\Phi}$ and satisfies the first injectivity condition of Proposition \ref{local isometry at spinors}. Therefore, the map $\Phi\vert_{(I - t_0) \times \tilde{\Phi}(t_0, L_p')}$ is injective and since $\tilde{\Phi}$ preserves the foliation $\mathcal{F}$, the map $\tilde{\Phi}^{L_p'}$ is also injective. Hence $\tilde{\Phi}^{L_p'}$ is an isometry onto its image.
\end{proof}
Assuming completeness of $E$ or $\tilde{E}$, we get the following global analogue of the above theorem.
\vspace{2mm}
\begin{theorem}\label{Riemannian-covering}
    Let $(M,g)$ be a connected Riemannian $\text{spin}^c$ manifold carrying a type I imaginary generalized Killing spinor. 
\vspace{1mm}
\begin{enumerate}[label=\alph*)]
        \item If $E$ is complete, then for any leaf $L_p$, we have that $\Phi^{L_p} : (\mathbb{R} \times L_p, \frac{dt^2}{g(V_{\varphi},V_{\varphi})} + h_t) \to (M,g)$ is a normal, Riemannian covering, where $h_t = \Phi_t^*(g\vert_{L_{\Phi_t(p)}})$ is a family of metrics on $L_p$ such that for each $t$, the manifold $(L_p, h_t)$ carries a parallel spinor. \label{global-a}
\end{enumerate}
\vspace{1mm}
Suppose in addition that $\varphi$ is special with Killing function $i\mu$.
\vspace{1mm}
\begin{enumerate}[label=\alph*)]
    \setcounter{enumi}{1} % 0 = a), 1 = b), etc.
    \item If $\tilde{E}$ is complete, then for any leaf $L_p$, we have that $\tilde{\Phi}^{L_p} : (\mathbb{R} \times L_p, dt^2 + e^{-4\int_{0}^{t}\mu ds} g\vert_{L_p}) \to (M,g)$ is a normal, Riemannian covering and $\varphi$ restricts to a parallel spinor on $(L_p, g\vert_{L_p})$. If $\mu$ is not identically zero and is either non-negative or non-positive, then the covering $\tilde{\Phi}^{L_p}$ is an isometry. \label{global-b}
\end{enumerate}      
\end{theorem}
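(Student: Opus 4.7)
The plan is to piece the statement together from the machinery already established, applying the type~I specializations on top of the general semi-Riemannian framework. For part \ref{global-a}, I would first invoke Proposition \ref{global covering at spinors} directly: since $\varphi$ is type~I, Lemma \ref{type I and constraint} together with the positive-definiteness of $\esc{\cdot,\cdot}$ in the Riemannian case gives $|V_\varphi|=\esc{\varphi,\varphi}>0$, so $V_\varphi$ is nowhere vanishing and the completeness assumption on $E$ lets us conclude that $\Phi^{L_p}$ is a normal Riemannian covering onto $(M,g)$ with the stated warped-product metric. The parallel spinor statement on $(L_p,h_t)$ would then follow by observing that $\Phi_t : L_p \to L_{\Phi_t(p)}$ is, by Proposition \ref{foliated} and construction of $h_t$, an isometry; hence the $\nabla^{S_{L_{\Phi_t(p)}}}$-parallel spinor $\esc{\varphi,\varphi}^{-\frac{1}{2}}\varphi\vert_{L_{\Phi_t(p)}}$ supplied by Lemma \ref{parallel-fiber} pulls back along $\Phi_t$ to a parallel spinor on $(L_p,h_t)$.

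For part \ref{global-b}, under the extra assumption that $\varphi$ is special with Killing function $i\mu$, Proposition \ref{global covering at spinors} again yields the normal Riemannian covering $\tilde\Phi^{L_p}$, and formula \eqref{eff} simplifies: in this Case~\ref{case-even-i} setting one has $c=2$, $\tilde\beta = 2\beta/|V_\varphi|=2$, and $j=i$, so $f(t)=\exp\int_0^t 2\mu\,ds$. I would then identify this, up to reparametrizing the sign convention, with the exponent $-4\int_0^t \mu\,ds$ in the statement (the factor comes from tracking the relation between $\tilde\beta$ and $\mu$ in the type~I/Riemannian case; this is the only small bookkeeping needed). That $\varphi$ restricts to a parallel spinor on $(L_p,g\vert_{L_p})$ is then immediate from Remark \ref{restriction remark}, since specialness forces $A(\tilde E)\parallel \tilde E$.

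For the final isometry claim, I would apply the last part of Proposition \ref{global covering at spinors}. The cleanest criterion available there is that $\mu\beta$ be not identically zero and of constant sign on $M$. In the present type~I Riemannian setting, $\beta=\esc{\varphi,\varphi}$ is strictly positive everywhere because a generalized Killing spinor has no zeroes (it is parallel for the modified connection $\tilde\nabla^S$, as noted at the start of the subsection) and $\esc{\cdot,\cdot}$ is positive definite. Consequently $\operatorname{sign}(\mu\beta)=\operatorname{sign}(\mu)$, and the hypotheses that $\mu$ is not identically zero and has constant sign translate verbatim into the hypotheses of Proposition \ref{global covering at spinors}. This upgrades $\tilde\Phi^{L_p}$ from a normal covering to a global isometry, completing the proof.

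The main obstacle is conceptual rather than technical: one must verify that the type~I condition funnels the arbitrary-signature machinery into precisely the clean Riemannian picture claimed, in particular that the sign-constancy hypothesis on $\mu$ alone suffices (which rests on positivity of $\beta$), and that the warping factor in \eqref{eff} matches the stated exponential after inserting $\tilde\beta$ in the type~I case. Once these identifications are made, no new computation is required beyond what Proposition \ref{global covering at spinors}, Lemma \ref{parallel-fiber}, and Remark \ref{restriction remark} already provide.
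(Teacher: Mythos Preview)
Your proposal is correct and follows essentially the same route as the paper: invoke Proposition \ref{global covering at spinors} for the covering statements, Lemma \ref{parallel-fiber} (together with the pullback along $\Phi_t$) for the parallel spinors on $(L_p,h_t)$, Remark \ref{restriction remark} for the special case, and the $\mu\beta$-criterion of Proposition \ref{global covering at spinors} for the isometry, using that $\beta=\esc{\varphi,\varphi}>0$ in the Riemannian setting.

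One small correction to your bookkeeping: in Case \ref{case-even-i} with $r=0$ one has $r\bmod 4=0\in\{0,3\}$, so Lemma \ref{closed-conformal-lemma} gives $c=-2$, not $c=+2$. Combined with $|V_\varphi|=\beta$ in the type~I case, this yields $\tilde\beta=-2$ and hence $f(t)=\exp\!\int_0^t(-2\mu)\,ds$, so $f^2=e^{-4\int_0^t\mu\,ds}$ on the nose---no ``reparametrizing the sign convention'' is needed.
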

\begin{proof}
    Everything except the last sentence follows from Proposition \ref{global covering at spinors}, Lemma \ref{parallel-fiber}, and Remark \ref{restriction remark}. To prove the last statement, one appeals to the injectivity condition of Proposition \ref{global covering at spinors} and argues as in the proof of Theorem \ref{uniform-pair-riemannian}.
\end{proof}
Theorem \ref{Riemannian-covering} generalizes Theorem 4.1 and Corollary 4.2 of \cite{GrosseNakad}. In particular, it extends Baum's classification of type I imaginary Killing spinors on spin manifolds \cite[Theorem 3]{Baum89} to the $\text{spin}^c$ setting. Theorem \ref{Riemannian-covering} \ref{global-a} also generalizes a part of Theorem 4 in \cite{Leistner-Lischewski}.

\subsubsection{Type II imaginary generalized Killing spinors}
Recall that an imaginary generalized Killing spinor on a Riemannian $\text{spin}^c$ manifold is said to be of type II if the constant $Q_- = \esc{\varphi, \varphi}^2 - g(V_{\varphi}, V_{\varphi})$ is positive. In case the type II imaginary generalized Killing spinor is a Killing spinor, then Gro{\ss}e and Nakad \cite[Proposition 4.3]{GrosseNakad} showed that the manifold is isometric to hyperbolic space, provided it is complete. It was shown by Rademacher \cite{Rademacher91} that on spin manifolds, all type II special imaginary generalized Killing spinors have constant Killing function, i.e. they are Killing spinors. Grosse and Nakad \cite[Section 4.2.2]{GrosseNakad} extends this to the $\text{spin}^c$ case for dimension at least $3$ and provide a counterexample in dimension $2$.
\\
\\
To clarify matters in the case when the generalized Killing spinor need not be special, we want to consider hypersurfaces of Lorentzian manifolds. The following identifications are described in \cite{Ammann-glöckle} and \cite{Bärnotes} in the spin case and in \cite{Nakad2011} for the $\text{spin}^c$ setting. We give a quick summary similar to the above discussion on Riemannian hypersurfaces of Riemannian $\text{spin}^c$ manifolds. Let $(\overline{M}, \overline{g})$ be a time oriented Lorentzian $\text{spin}^c$ manifold with spinor bundle $S$ and let $(M, g)$ be a spacelike hypersurface with unit normal $N$. We give $(M, g)$ the induced $\text{spin}^c$ structure and write $S_M$ for the corresponding spinor bundle. When $\dim{\overline{M}}$ is odd, we have that $S_M \cong S\vert_M$ and when $\dim{\overline{M}}$ is even, we have that $S_M \cong S^+\vert_M$. In this setup, the $S^1$-bundle associated with the $\text{spin}^c$ structure on $M$ is simply the restriction $P_{S^1}\vert_{M}$, where $P_{S^1}$ is the $S^1$-bundle associated to the $\text{spin}^c$ structure on $\overline{M}$. Hence, given a connection $A \in \Omega^1(P_{S^1}, i\mathbb{R})$, we get induced connections $\nabla^S$ on $S$ and $\nabla^{S_{M}}$ on $S_{M}$. Clifford multiplication $\bullet$ on $S_M$ is related to Clifford multiplication $\cdot$ on $S$ via $X \bullet \psi = iN\cdot X \cdot \psi$. Correspondingly, the connection $\nabla^S$ relates to the connection $\nabla^{S_{M}}$, via 
\begin{align}\label{hypersurface of Lorentz}
    (\nabla^S_X\psi)_M = \nabla^{S_M}_X(\psi\vert_M) - \frac{i}{2}W^{N}(X)\bullet (\psi\vert_M).
\end{align}
The positive definite hermitian bundle metric $( \cdot,\cdot )$ in $S_M$ from \hyperref[structures]{structures on the spinor bundle} is given explicitly by $(\varphi, \psi) := \esc{N \cdot \varphi, \psi}$, where $\esc{\cdot, \cdot}$ is the scalar product in $S$. With this setup, we show that when generalized Killing spinors are defined in this generality, type II imaginary generalized Killing spinors exist in abundance.
\vspace{2mm}
\begin{theorem}\label{restriction}
    Let $(\overline{M}, \overline{g})$ be a time oriented Lorentzian $\text{spin}^c$ manifold carrying a parallel spinor $\varphi$. Then, for any spacelike hypersurface in $(\overline{M}, \overline{g})$ with unit normal $N$, $\varphi$ restricts to an imaginary generalized Killing spinor with Killing endomorphism given by $\frac{1}{2}$ times the Weingarten map of that hypersurface. If the Dirac current $V_{\varphi}$, with respect to $\varphi$ on $(\overline{M}, \overline{g})$ is timelike, then the restricted spinor is of type II, and if it is lightlike, the restricted spinor is of type I.
\end{theorem}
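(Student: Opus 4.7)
The plan is to feed the parallel spinor $\varphi$ directly into the $\text{spin}^c$ Gau{\ss} formula (\ref{hypersurface of Lorentz}). Since $\nabla^S_X\varphi = 0$ for every $X \in \Gamma(T\overline{M})$, the restriction $(\nabla^S_X\varphi)|_M$ vanishes, so (\ref{hypersurface of Lorentz}) reduces to
\begin{equation*}
    \nabla^{S_M}_X(\varphi|_M) = i\cdot \tfrac{1}{2} W^{N}(X) \bullet (\varphi|_M),
\end{equation*}
which is precisely the imaginary generalized Killing equation on $(M, g|_M)$ with Killing endomorphism $A = \tfrac{1}{2}W^{N}$. The $g|_M$-symmetry of $A$ is the standard symmetry of the Weingarten map of a semi-Riemannian hypersurface, so the first assertion of the theorem is immediate.

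For the type distinction I would compute the locally constant invariant $Q_- = (\varphi|_M, \varphi|_M)^2 - g|_M(V_{\varphi|_M}, V_{\varphi|_M})$ associated to $\varphi|_M$ on $M$, and show that it equals $-\overline{g}(V_\varphi, V_\varphi)$. Decompose the ambient Dirac current as $V_\varphi = V^T + cN$ with $V^T \in \Gamma(TM)$ and $c \in C^\infty(M)$; using $\omega_\varphi(X) = i^{r+1}\esc{\varphi, X\cdot\varphi}$ with $r = 1$ gives $c = -\overline{g}(V_\varphi, N) = \esc{\varphi, N \cdot \varphi}$. The relation $\esc{N \cdot \varphi, \psi} = \esc{\varphi, N \cdot \psi}$, which holds in Lorentzian signature since $(-1)^{r+1} = 1$, combined with Hermitian symmetry shows $c \in \mathbb{R}$ and also identifies the induced Hermitian product $(\varphi|_M, \varphi|_M) = \esc{N \cdot \varphi, \varphi}$ with $c$. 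Next, the identification $X \bullet \eta = i N \cdot X \cdot \eta$ together with $N \cdot N = 1$ (since $\overline{g}(N, N) = -1$) and the difference between $i^{r+1} = i$ on $M$ and $i^{r+1} = -1$ on $\overline{M}$ yields $\omega_{\varphi|_M}(X) = \esc{\varphi, X \cdot \varphi} = -\omega_\varphi(X)$ for every $X \in TM$, so $V_{\varphi|_M} = -V^T$. Plugging the orthogonal decomposition $\overline{g}(V_\varphi, V_\varphi) = |V^T|^2 - c^2$ into the definition of $Q_-$ then gives $Q_- = c^2 - |V^T|^2 = -\overline{g}(V_\varphi, V_\varphi)$.

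With this identity both cases are immediate: if $V_\varphi$ is timelike, then $\overline{g}(V_\varphi, V_\varphi) < 0$, so $Q_- > 0$ and $\varphi|_M$ is of type II; if $V_\varphi$ is lightlike, then $\overline{g}(V_\varphi, V_\varphi) = 0$, so $Q_- = 0$ and $\varphi|_M$ is of type I. The main obstacle I anticipate is purely bookkeeping: the factors $i^{r+1}$ and $(-1)^{r+1}$ appearing in the definition of the Dirac current and in the symmetry rule of \hyperref[structures]{structures on the spinor bundle} change between the ambient ($r=1$) and hypersurface ($r=0$) settings, and they compound with the extra factor $i$ in the Clifford-multiplication identification $X \bullet \eta = iN \cdot X \cdot \eta$. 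Once these signs are tracked carefully the identity $Q_- = -\overline{g}(V_\varphi, V_\varphi)$ is only a few lines of direct computation, and the theorem follows.
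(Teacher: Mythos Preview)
Your proposal is correct and follows essentially the same approach as the paper: apply the Gau{\ss} formula (\ref{hypersurface of Lorentz}) to obtain the imaginary generalized Killing equation, then compare the ambient and hypersurface Dirac currents by unwinding the identifications $(\,\cdot\,,\,\cdot\,)=\esc{N\cdot\,\cdot\,,\,\cdot\,}$ and $X\bullet\eta=iN\cdot X\cdot\eta$ to obtain $Q_- = -\overline{g}(V_\varphi,V_\varphi)$, from which the type distinction follows at once. The paper's argument is virtually identical, the only cosmetic difference being that it records the decomposition as $V_\varphi=-U_\phi+(\phi,\phi)N$ rather than stating the identity $Q_-=-\overline{g}(V_\varphi,V_\varphi)$ explicitly.
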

\begin{proof}
    By (\ref{hypersurface of Lorentz}), we have that $\varphi$ restricted to any hypersurface is an imaginary generalized Killing spinor $\phi$ with Killing endomorphism $\frac{1}{2}W^{N}$. Let $U_{\phi}$ denote the Dirac current on the hypersurface with respect to this restricted spinor. For any vector field $X$ tangent to the hypersurface, we compute that
    \begin{align*}
        \overline{g}(U_{\phi}, X) &= i(\phi, X \bullet \phi) = i\esc{N \cdot \varphi, i N \cdot X \cdot \varphi} \\
        &=\esc{N \cdot N \cdot \varphi, X \cdot \varphi} = \esc{\varphi, X \cdot \varphi} \\
        &= -\overline{g}(V_{\varphi}, X).
    \end{align*}
    Further, we see that
    \begin{align*}
        \overline{g}(V_{\varphi}, N) = -\esc{\varphi, N \cdot \varphi} = -\esc{N \cdot \varphi, \varphi} = -(\phi, \phi),
    \end{align*}
    and thus, along the hypersurface, we have that $V_{\varphi} = -U_{\phi} + (\phi, \phi)N$. If $V_{\varphi}$ is timelike, we get that 
    \begin{align*}
        0 > \overline{g}(V_{\varphi}, V_{\varphi}) = \overline{g}(U_{\phi}, U_{\phi}) + (\phi, \phi)^2\overline{g}(N,N) = \overline{g}(U_{\phi}, U_{\phi}) - (\phi, \phi)^2,
    \end{align*}
    which precisely means that $\varphi$ restricts to a type II imaginary generalized Killing spinor on the hypersurface. Similarly, if $V_{\varphi}$ is lightlike, the restricted spinor will be of type I.
\end{proof}
Lorentzian $\text{spin}^c$ manifolds carrying a parallel spinor such that the Dirac current is timelike, exists, and were studied in \cite{IKEMAKHEN2007}. The second part of the above statement is essentially known and is a motivation for studying type I imaginary generalized Killing spinors; see for example, \cite{Leistner-Lischewski}.

\backmatter

%\begin{appendices}

%%=============================================%%
%% For submissions to Nature Portfolio Journals %%
%% please use the heading ``Extended Data''.   %%
%%=============================================%%

%%=============================================================%%
%% Sample for another appendix section			       %%
%%=============================================================%%

%% \section{Example of another appendix section}\label{secA2}%
%% Appendices may be used for helpful, supporting or essential material that would otherwise 
%% clutter, break up or be distracting to the text. Appendices can consist of sections, figures, 
%% tables and equations etc.

%\end{appendices}

%%===========================================================================================%%
%% If you are submitting to one of the Nature Portfolio journals, using the eJP submission   %%
%% system, please include the references within the manuscript file itself. You may do this  %%
%% by copying the reference list from your .bbl file, paste it into the main manuscript .tex %%
%% file, and delete the associated \verb+\bibliography+ commands.                            %%
%%===========================================================================================%%
\newpage
\bibliographystyle{unsrtnat}
%\bibliography{sn-bibliography}% common bib file
%% if required, the content of .bbl file can be included here once bbl is generated

 %Arxiv needs this file (It can be found in output-files next to compile, if you scroll down).

\end{document}